\newcommand{\Eitan}[1]{{{#1}}}
\newcommand{\EitanB}[1]{{{#1}}}
\newcommand{\EitanC}[1]{{{#1}}}
\newcommand{\EitanD}[1]{{{#1}}}
\newcommand{\EitanE}[1]{{{#1}}}
\newcommand{\EitanF}[1]{{{#1}}}
\newcommand{\Rami}[1]{{{#1}}}
\newcommand{\RamiE}[1]{{{#1}}}
\newcommand{\RamiG}[1]{{{#1}}}
\newcommand{\RamiH}[1]{{{#1}}}
\newcommand{\RamiI}[1]{{{#1}}}
\newcommand{\RamiJ}[1]{{#1}}
\newcommand{\RamiK}[1]{{#1}}
\newcommand{\RamiL}[1]{{#1}}
\newcommand{\RamiM}[1]{{{#1}}}
\newcommand{\RamiN}[1]{{{#1}}}
\newcommand{\RamiO}[1]{{{#1}}}
\newcommand{\NextVer}[1]{}
\newcommand{\spa}{\mathrm{span}}
\newcommand{\CM}{Cohen-Macaulay} 
\begin{document}

\author[Aizenbud]{Avraham Aizenbud}
\address{Avraham Aizenbud,
Faculty of Mathematical Sciences,
Weizmann Institute of Science,
76100
Rehovot, Israel}
\email{aizenr@gmail.com}
\urladdr{http://aizenbud.org}

\author[Bernstein]{Joseph Bernstein}
\address{Joseph Bernstein,
		School of Mathematical Sciences, Tel Aviv University, Tel Aviv 6997801, Israel}
\email{bernstei@tauex.tau.ac.il}
\urladdr{http://www.math.tau.ac.il/~bernstei/}

\author[Sayag]{Eitan Sayag}
\address{Eitan Sayag,
 Department of Mathematics,
Ben Gurion University of the Negev,
P.O.B. 653,
Be'er Sheva 84105,
ISRAEL}
 \email{eitan.sayag@gmail.com}

\urladdr{https://www.math.bgu.ac.il/~sayage/}

\date{\today}
\keywords{Cohen-Macaulay, density, spherical characters, Whittaker model}
\subjclass{20G05,20G25,46F99,43A85}
%
%
%
%
%
%
%
%

\title{
Strong density of spherical characters attached to unipotent subgroups}
\maketitle
\begin{abstract}
We prove the following \EitanD{result in relative representation theory of a reductive  p-adic group $G$:}

\RamiJ{Let $U$ be the unipotent radical of a minimal parabolic subgroup of $G$, and let $\psi$ be an arbitrary smooth character of $U$.}
Let $S \subset \Irr(G)$ be a Zariski dense collection of irreducible representations of $G$. Then the span of the Bessel distributions $B_{\pi}$ attached to representations $\pi$ from $S$ is dense in the space  $\Sc^*(G)^{U\times U,\psi \times \psi}$ of all $(U\times U,\psi \times \psi)$-equivariant distributions on $G.$

\EitanD{We base our proof on the following results.}
\begin{enumerate}
	\item The category of smooth representations $\cM(G)$ is \CM.
	\item \RamiJ{The module} \RamiH{$ind_U^G(\psi)$} is a projective module.
\end{enumerate}
\end{abstract}

\tableofcontents

\section{Introduction}
Let $\bfG$ be a reductive (connected) algebraic group defined over a non-Archimedean local field $F$.  Let $G=\bfG(F)$ \EitanD{be the corresponding $\ell$-group.}


In this paper we prove few results about the representation theory of $G$ and about its relation with unipotent subgroups of $G$. \EitanC{Our main result concerns density of spherical characters with respect to unipotent subgroups. 
}
In order to formulate it we recall the \EitanD{construction} 
of spherical character \EitanD{attached to an irreducible smooth representation.
We denote by $\cH(G)$ the Hecke algebra of $G$ consisting of smooth compactly supported measures on $G.$ We also let $C^{-\infty}(G)$ the space of generalized functions on $G.$ These are linear functionals on $\cH(G).$
}
\begin{defn}
	Let $(\pi,V)$ be a smooth \EitanE{(complex)} representation of $G.$
	Let $\pi^*$ be the \RamiJ{dual representation} of $\pi$ and \RamiJ{let} $\tilde \pi:=(\pi^{*})^{\infty}$  be the \RamiJ{contragredient  (i.e. smooth dual) representation} of $\pi$.
	For $l_1\in \pi^*$ and $l_2\in \tilde \pi^*$,  we define the  
	spherical character as the generalized function $\chi^{\pi}_{l_1,l_2}\in C^{-\infty}(G)$ by $$\langle \chi^{\pi}_{l_1,l_2} ,f \rangle:=\langle  f\cdot l_1,l_2\rangle,$$  for any smooth compactly supported measure $f \in \cH(G)$.

\end{defn}
Note that if \EitanE{$H_{1},H_{2} \subset G$ are two (closed) subgroups, $\chi_{i}:H_{i} \to \C$ are (continuous) characters and  $l_1\in(\pi^*)^{H_{1},\chi_{1}},l_2\in(\tilde \pi^*)^{H_{2},\chi_{2}}$\RamiN{,} then the spherical character $\chi^{\pi}_{l_1,l_2}$ lies  in the space of $(H_{1} \times H_{2}, \chi_{1} \times \chi_{2})$-equivariant generalized functions. We shall denote this space by $C^{-\infty}(G)^{H_{1}\times H_{2},\psi\times \psi}$.}

\EitanD{We also} recall the infinitesimal character map.
\EitanD{Let $\cM(G)$ be the category of smooth \EitanF{(complex)} representations of $G.$ We will denote by $\Irr(G)$ the set of isomorphism classes of irreducible smooth representations of $G.$}
Let $\fz(G)$ be the Bernstein  center of $\cM(G).$ Given an irreducible representation $(\pi,V) \in \Irr(G)$ the action of each $z \in \fz(G)$ on $V$ is, by Schur's lemma, given by a multiplication with a complex scalar $\chi_{(V,\pi)}(z) \in \C.$ Notice that 
$\chi_{(V,\pi)}: \fz(G) \to \C$ is an algebra homomorphism.  
\RamiJ{Denote by 
\begin{equation}\label{eq:theta}
\Theta(G):=Mor_{\C}(\fz(G),\C)
\end{equation}
the set of algebra homomorphisms of the 
center.}

\RamiJ{W}e obtain the {\it infinitesimal character} map:
 
\begin{equation}\label{inf.char}
\inf: \Irr(G) \to \Theta(G)
\end{equation}
\EitanD{defined by}
$\inf(V):=\chi_{(V,\pi)}.$

In words, $\inf(V)$ is the character by which $\fz(G),$ the Bernstein's 
center of $G$, acts on the irreducible representation $V$.
\begin{remark}
Let $A$ be a unital algebra over $\C.$ It is natural to consider the set of maximal ideal in $A$ or the set of prime ideals in $A$ as its spectrum. Our choice to consider $\Spec_{\C}(A):=Mor_{\C}(A,\C)$ is guided by the 
fact that when $B=\Pi_{\alpha \in I} A_{\alpha}$ we obtain $\Spec_{\C}(B)=\bigsqcup_{\alpha \in I} \Spec_{\C}(A_{\alpha})$. {Such equality does not hold for the prime (or maximal) spectrum}. \EitanD{For more details one can consult \cite{LLS}}.
\end{remark}

\EitanD{To formulate our density result, we introduce the \RamiH{ad-hoc notion} of {\bf rich} collections of irreducible representations.}

\begin{defn}
We say that a set $\Pi\subset \Irr(G)$ (of smooth irreducible representations of $G$) is {\bf rich} if 
$\inf(\Pi) \subset \Theta(G)$ is Zariski dense.
\end{defn}


\EitanD{Our} density theorem \EitanD{for} spherical characters is the following:
\NextVer{2 different $\psi$}
\begin{introtheorem}[Density; see \S \ref{sec:dens} below]\label{thm:dens}
Let $U<G$ be a closed subgroup which is exhausted by its open compact subgroups (i.e. any compact subset of $U$ is contained in a compact subgroup of $U$).
	Let $\psi$ be  a character of $U$.
    Assume that $(G,U)$ is of finite type (see Definition \ref{def:ft} below). 
    	Let $\Pi\subset \Irr(G)$ be {\bf rich}. 

	
	Then the space of spherical characters $$\spa(\{\chi^{\pi}_{l_1,l_2}|\pi\in \Pi, l_1\in(\pi^*)^{U,\psi},l_2\in(\tilde \pi^*)^{U,\psi}\})$$ is dense in the space
	 $C^{-\infty}(G)^{U\times U,\psi\times \psi}$
\end{introtheorem}	

{
The following \RamiI{theorem} provides examples of pairs satisfying the conditions of the theorem above. 
\begin{theorem}\label{thm:BH}
\EitanD{Let ${\bf U}$ be a maximal unipotent subgroup of ${\bf G}$ and let $U={\bf U}(F).$}
Then $(G,U)$ is of finite type. Namely, for any character $\psi:U \to \C^{\times}$ and any irreducible $\pi \in \Irr(G)$ we have:
    $$\dim (\pi^{*})^{U,\psi} < \infty$$
\end{theorem}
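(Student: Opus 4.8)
The plan is to reformulate the statement in terms of twisted Jacquet modules and then, using the standard parabolic subgroup attached to $\psi$, to reduce to the classical finiteness of the non-degenerate Whittaker model. First note that, for the dual action of $U$ on $\pi^{*}$, one has a canonical isomorphism $(\pi^{*})^{U,\psi}\cong(\pi_{U,\psi^{-1}})^{*}$, where for a character $\chi$ of $U$ I write $\pi_{U,\chi}$ for the twisted Jacquet module, i.e.\ the quotient of $\pi$ by the linear span of the vectors $\pi(u)v-\chi(u)v$ with $u\in U$ and $v\in\pi$. Hence it suffices to prove $\dim_{\C}\pi_{U,\psi}<\infty$ for every $\pi\in\Irr(G)$ and every character $\psi$ of $U$; note that such a $\pi$ is automatically admissible.

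Next I would exploit the shape of $\psi$. A character of the unipotent group $U$ is trivial on $[U,U]$, hence factors through $U^{\mathrm{ab}}$, which decomposes as a product over the relative simple roots of the corresponding simple root subgroups. Let $L$ be the standard Levi subgroup whose root system is generated by those relative simple roots on which $\psi$ is non-trivial, and let $Q=LN$ be the corresponding standard parabolic, with unipotent radical $N$. Then $U_{L}:=U\cap L$ is a maximal unipotent subgroup of $L$; the group $N$ is normal in $U$ and $U=U_{L}N$; and $\psi|_{N}=1$ while $\psi|_{U_{L}}$ is non-degenerate --- indeed every root subgroup inside $N$ is attached either to a non-simple root, hence lies in $[U,U]$, or to a simple root on which $\psi$ is trivial. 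Computing the twisted Jacquet module in stages then gives $\pi_{U,\psi}\cong(\pi_{N})_{U_{L},\psi|_{U_{L}}}$, where $\pi_{N}$ is the (unnormalized) Jacquet module of $\pi$ along $N$, viewed as a representation of $L$; replacing $\pi_{N}$ by the normalized Jacquet module only twists it by a character of $L$, and such a character is trivial on $U_{L}$, so it does not affect any of the dimensions that follow.

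Now I would invoke the structure theory of Jacquet modules. By the theorem of Jacquet and Casselman, $\pi_{N}$ is an admissible representation of $L$; it is also finitely generated, since $\pi$ is and the Jacquet functor preserves finite generation, and a finitely generated admissible smooth representation has finite length. The twisted coinvariants functor $\tau\mapsto\tau_{U_{L},\psi|_{U_{L}}}$ is exact on smooth representations --- it is a filtered colimit of the exact functors attached to the idempotents associated with the character $\psi|_{U_{L}}$ on a cofinal family of compact open subgroups of $U_{L}$ --- so $\dim_{\C}\pi_{U,\psi}$ equals the sum, over the Jordan--H{\"o}lder constituents $\tau$ of $\pi_{N}$ counted with multiplicity, of $\dim_{\C}\tau_{U_{L},\psi|_{U_{L}}}$. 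Each $\tau$ is an irreducible representation of $L$, and $\dim_{\C}\tau_{U_{L},\psi|_{U_{L}}}$ is finite: when $U_{L}=\{1\}$ this holds because $L$ is then anisotropic modulo its centre and has only finite-dimensional irreducible smooth representations, and otherwise $\psi|_{U_{L}}$ is non-degenerate and the finiteness is that of the Whittaker model (uniqueness, by Gelfand--Kazhdan and Shalika, in the quasi-split case). Summing over the finitely many constituents gives $\dim_{\C}\pi_{U,\psi}<\infty$, as required.

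I expect the main obstacle to be the bookkeeping of the reduction step: choosing $L$ and $Q$ correctly and verifying both $\psi|_{N}=1$ and the ``Jacquet module in stages'' identity $\pi_{U,\psi}\cong(\pi_{N})_{U_{L},\psi|_{U_{L}}}$, with all root-theoretic statements read in the relative root system when $G$ is neither split nor quasi-split. Once the problem has been brought to a non-degenerate character on a Levi subgroup, everything rests on two facts already available in the literature --- that Jacquet modules of irreducible representations have finite length, and the uniqueness of the non-degenerate Whittaker model --- so no essentially new difficulty should remain.
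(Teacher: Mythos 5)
Your overall strategy is essentially the paper's: both arguments reduce, via the Jacquet functor along a proper parabolic on whose unipotent radical $\psi$ is trivial, to the non-degenerate case on a Levi subgroup, and then combine exactness of twisted coinvariants, finite length of Jacquet modules of irreducible representations, and the finiteness of non-degenerate Whittaker spaces. The only structural difference is that the paper peels off one simple root at a time by induction on $\dim\bfG$ (Proposition \ref{prop:deg} produces \emph{some} proper parabolic), whereas you pass in one step to the Levi $L$ generated by all simple roots on which $\psi$ is non-trivial; this is a harmless repackaging.

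The genuine gap is in your justification that $\psi|_{N}=1$. You assert that every root subgroup inside $N$ is ``attached either to a non-simple root, hence lies in $[U,U]$, or to a simple root on which $\psi$ is trivial.'' As stated this is false for non-reduced relative root systems: if $\alpha$ is simple and $2\alpha\in\Phi^{+}$, then $2\alpha$ is a non-simple positive root but $U_{(2\alpha)}$ need not lie in $[U,U]$; the correct dichotomy is between roots colinear to a simple root (handled by $U_{(2\alpha)}\subset U_{(\alpha)}$) and roots not colinear to any simple root. More seriously, even for $\gamma$ not colinear to a simple root the inclusion $U_{(\gamma)}\subset[U,U]$ is not a formal consequence of $\psi$ factoring through $U^{\mathrm{ab}}$: this is precisely Lemma \ref{lem:der}, whose proof needs the identity $[U,U]=[\bfU,\bfU](F)$ and a case-by-case analysis resting on \cite{BH02}, which is delicate in characteristic $2$ and for non-split groups. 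This step is the technical heart of the reduction and cannot be waved through. A smaller issue: in your base case the appeal to uniqueness of Whittaker models (Gelfand--Kazhdan, Shalika) only covers quasi-split $L$; for a general Levi subgroup the required input is the finiteness theorem of \cite[\S 4]{BH03} itself, which should be cited directly rather than multiplicity one.
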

\RamiO{
This theorem can be easily deduced from the results of \cite{BH03}, see Appendix \ref{app:BH} for more details. 
}
}


The proof of Theorem \ref{thm:dens} is based on the following two results.

\begin{introtheorem}[See \S \ref{sec:CM} below]\label{thm:CM}
The category $\cM(G)$ of smooth representations of $G$ is \CM, 
i.e. for any finitely generated projective module $P\in \cM(G)$ the algebra 
$End(P)$ is a \CM\ module \Rami{of full dimension} \RamiJ{(see Definition \ref{def:cm} below)} over its center.  
\end{introtheorem}



\begin{introtheorem}[See \S \ref{sec:proj} below]\label{thm:proj}
Let $\bfG,G,U,\psi$ and $F$ be as in Theorem \ref{thm:dens}. 

    Then
    the module $ind_{U}^G(\psi)$ is a projective object in the category $\cM(G) $ of smooth representations of $G$.	 
\end{introtheorem}

Using the Bernstein decomposition of the category $\mathcal{M}(G)$, the proof of Theorem \ref{thm:proj} is easy.  It is merely the observation that 
\EitanF{finitely} presented flat modules \RamiJ{are} projective.


\subsection{Related Works}
\subsubsection{\RamiO{Density of spherical characters}}
\RamiJ{Density of characters of irreducible representations inside the space of $Ad$-invariant distribution, is  a classical result in representation theory (See \cite{DKV84,Kaz86}). In \cite{AGS_Z} a general result about density of spherical characters of \emph{admissible} representations was proven. Theorem \ref{thm:dens} give a very strong kind of density for the unipotent case. We do not expect this kind of density to hold in general.}

\subsubsection{\RamiO{Cohen-Macaulay property of $\cM(G)$}}
\EitanC{
\RamiJ{A proof of} Theorem \ref{thm:CM}  was \RamiJ{recently} published as Proposition 3.1 of \cite{BBK}. For completeness we include the proof here. }
{

\subsubsection{\RamiO{Projectivity of $ind_{U}^{G}(\psi)$}}
\RamiJ{For non-degenerate $\psi$, t}he small Whittaker module $\mathcal W=ind_{U}^{G}(\psi)$
is studied in \cite{BH03}. In particular it is shown to be finitely generated over any component of $\Theta(G)$, (\cite[Theorem 4.2,(2)]{BH03}).
}

In recent works, the small Whittaker module $\mathcal W$ was studied further and\RamiH{, in the quasi-split case, its projectivity was shown in \cite[Corollary A.6]{CS19}}. \RamiH{Note that the proof in \cite{CS19} can be easily generalized to cover Theorem \ref{thm:proj}}.

Our argument here is \RamiH{slightly more general,} it is based on Bernstein's decomposition of $\cM(G)$ (showing in particular that $\cM(G)$ is locally Noetherian) and a general result \RamiH{from} homological algebra connecting flatness to projectivity for finitely presented modules.
\NextVer{structure of the paper}
\subsection{Acknowledgments}

\Rami{We} would like to thank Guy Henniart for encouraging us to write \RamiN{Appendix \ref{app:BH}}, \EitanE{for his help in \RamiN{Appendix \ref{app:deg}}, for explaining us the proof of Lemma \ref{lem:der} and for many useful suggestions regarding the exposition of the paper}. We also thank Elad Sayag for a useful conversation concerning \RamiN{Theorem \ref{thm:proj}}.

\section{The Bernstein decomposition of the category $\cM(G)$ and the Bernstein center}
In this section we summarize the parts of the theory of the Bernstein center and Bernstein decomposition that are used in this paper.

\EitanD{
Recall that $\bfG$ is a reductive (connected) algebraic group defined over a non-Archimedean local field $F$ and  $G=\bfG(F)$.
}

\Rami{Throughout \EitanD{the body of} the paper we fix a minimal parabolic subgroup $\RamiM{\bf B}\subset \bf G$ and its Levi subgroup $\RamiM{\bf T}$.  
All the parabolic subgroups and Levi subgroups that we consider are assumed to contain $\RamiM{\bf T}$ (even if we do not say that explicitly).  
\EitanD{Note that any parabolic subgroup that contains $\RamiM{\bf T}$ has a unique Levi subgroup $M$ that contains $\RamiM{\bf T}.$}

By parabolic and Levi subgroups of $G$  we mean 
the groups of $F$-points of parabolic and Levi subgroups of $\bf G$.
{
We now list} few notations to formulate some results from the theory of Bernstein center.
}
	\begin{notation}
		Let $\bf P$ be a \RamiG{parabolic subgroup} of $\bf G$ and  $\bfM$ be its Levi \Rami{subgroup}. Let $\rho$ be a cuspidal representation of $M=\bfM(F)$. Denote
		\begin{itemize}
			\item $\RamiE{i^G_M}: \cM(M) \to \cM(G)$ the (normalized) parabolic induction {from} $M$ with respect to $P.$
			\item $ \RamiE{\overline i^G_M}(\rho): \cM(M) \to \cM(G)$   the (normalized) parabolic induction from $M$ with respect \RamiJ{to the} opposite \RamiG{parabolic subgroup}  $\bar \bfP$.
			\item
			$\RamiE{r^G_M}:  \cM(G) \to \cM(M)$  the (normalized) Jacquet functor.
			\item
			$\RamiE{\overline r^G_M}:  \cM(G) \to \cM(M)$  the (normalized) Jacquet functor
			with respect \RamiJ{to the} opposite \RamiG{parabolic subgroup}  $\bar \bfP$.
			\EitanE{\item We denote by $G^{0}$ the subgroup of $G$ generared by compact subgroups. }
			\item \EitanD{For \Rami{a Levi subgroup}  $M \subset G$ we denote by $\fX_M$ the \EitanE{complex} torus of unramified characters on $M.$ \EitanE{Note that the ring $O(\fX_{M})$ of regular functions on $\fX_{M}$ is isomorphic to the group algebra $\C[M/M^{0}]$.}
			\item \EitanE{By a cuspidal data we mean a pair $(M,\rho)$ consisting of a Levi subgroup $M \subset G$ and its irreducible cuspidal representation $\rho$.} } 
		\end{itemize}
	\end{notation}
     
     
	

	The following theorems \EitanD{summarizes} 
	the results proved in {\cite{Ber_cent,Ber_sec,Ber_padic,Bus}}. 
	See  {\cite[Theorem 2.5]{AS_hom}} and {\cite[\S\S 2.1]{AAG}} for detailed references for each item.
\subsection{\Rami{Bernstein Decomposition}}	

\begin{theorem}\label{thm:cent:dec}
	$ $
		\begin{enumerate}

			\item\label{Ber.cent:2.5} {The f}unctor $\RamiE{r^G_M}$ is right adjoint to $\RamiE{\overline i^G_M}$, that is $$ {\Hom_G}(\RamiE{\overline i^G_M}(V),W) \cong  {\Hom_M}(V,r^G_M(W)).$$
			
			\item\label{Ber.cent:3}  For a {cuspidal \EitanD{data}} $(M,\rho)$ let \EitanB{$\Psi_{G}(M,\rho)=\RamiE{\overline i^G_M}(\rho \otimes O(\fX_M))$} be the normalized parabolic induction of \EitanE{$\rho \otimes O(\fX_M)$, where} the action of $M$ is diagonal. Then $\Psi_{G}(M,\rho)\in \cM(G)$ is a projective generator of a direct summand of the category $\cM(G).$

			\item\label{Ber.cent:4}  We call the category generated by $\Psi_{G}(M,\rho)$ a Bernstein block of $\cM(G)$. The collection of those blocks is denoted by $\Omega$. For $\omega\in \Omega$  and $V\in \cM(G)$ we denote by $V_\omega\in \omega$ to be the corresponding direct summand. We have $$\cM(G)=\EitanD{\prod_{\omega\in \Omega}} \omega.$$
			
			\item \label{descriptionofcenter} \EitanF{F}or a cuspidal data $(\rho,M)$, the Bernstein center $\fz(G)$ acts through a character $\chi_{(M,\rho)}$ on $i_M^G(\rho)$. This gives a bijection between \RamiJ{the set} of conjugacy classes of cuspidal data and the \RamiJ{variety} $\Theta(G)$ \RamiJ{(see \eqref{eq:theta})}.
		\end{enumerate}
		
	\end{theorem}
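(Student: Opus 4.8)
The plan is to present this not as new mathematics but as a recollection of Bernstein's structure theory of $\cM(G)$ (see \cite{Ber_cent,Ber_sec,Ber_padic,Bus}), indicating only the logical skeleton; the single deep ingredient is the \emph{second adjointness} of the first item, which I would cite rather than reprove. For that first item one invokes Bernstein's theorem that the Jacquet functor $r^G_M$ is right adjoint to the induction $\overline{i}^G_M$ from the opposite parabolic. Producing the unit and counit of this adjunction is the technical heart of the whole picture: it is done either through Bernstein's original analysis of the asymptotics of matrix coefficients together with the geometry of $G/\overline P$, or through the more recent approach of Bezrukavnikov--Kazhdan and Dat via contracting torus actions on the Bernstein center. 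For the sequel I only need two formal consequences: $r^G_M$ is exact (true for Jacquet functors in general), and hence $\overline{i}^G_M$, being left adjoint to an exact functor, preserves projective objects.

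Granting this, the second item is short. The building block is the cuspidal case, handled directly: a cuspidal $\rho$ and its unramified twists determine the full subcategory $\omega_M\subset\cM(M)$ of representations all of whose irreducible subquotients are unramified twists of $\rho$, and $\rho\otimes O(\fX_M)\cong\rho\otimes\C[M/M^0]$ is a projective generator of $\omega_M$. Applying $\overline{i}^G_M$ and the consequence just recorded, $\Psi_G(M,\rho)=\overline{i}^G_M(\rho\otimes O(\fX_M))$ is projective in $\cM(G)$; and from second adjointness
\[
\Hom_G\bigl(\Psi_G(M,\rho),\,-\bigr)\;\cong\;\Hom_M\bigl(\rho\otimes O(\fX_M),\,r^G_M(-)\bigr),
\]
which is nonzero precisely on those $W$ having an irreducible subquotient whose cuspidal support is $G$-conjugate to $(M,\rho\otimes\chi)$ for some $\chi\in\fX_M$. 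Thus $\Psi_G(M,\rho)$ generates the full subcategory $\omega$ consisting of such $W$. To see that $\omega$ is a direct summand, one checks that $\Hom_G$ vanishes between objects with cuspidal supports in distinct inertial orbits: the Bernstein--Zelevinsky geometric lemma shows that $r^G_M\overline{i}^G_{M'}$ never mixes inertial classes, and the vanishing of $\Hom_G$ then reduces, via the adjunctions, to the pairwise disjointness of the cuspidal blocks over the various Levi subgroups.

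For the third and fourth items I would use Jacquet's subrepresentation theorem: every irreducible $\pi\in\cM(G)$ embeds in $i^G_M(\rho)$ for a cuspidal datum $(M,\rho)$ unique up to $G$-conjugacy. Partitioning these data into inertial classes indexes the blocks $\omega\in\Omega$; since each $\omega$ has the projective generator $\Psi_G(M,\rho)$ and the blocks are pairwise orthogonal and jointly generating, $\cM(G)=\prod_{\omega\in\Omega}\omega$. The Bernstein center, being the endomorphism ring of the identity functor, therefore splits as $\fz(G)=\prod_{\omega}Z(\omega)$ with $Z(\omega)=Z(\operatorname{End}_G\Psi_G(M,\rho))$; and the geometric lemma identifies $\operatorname{End}_G\Psi_G(M,\rho)\cong\Hom_M\bigl(\rho\otimes O(\fX_M),\,r^G_M\overline{i}^G_M(\rho\otimes O(\fX_M))\bigr)$ as a finite $O(\fX_M)$-algebra whose center is $O(\fX_M)^{W(M,\rho)}$, where $W(M,\rho)$ is the finite group of relative Weyl elements fixing $(M,[\rho])$ up to unramified twist. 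Consequently $\Theta(G)=\Spec_{\C}\fz(G)=\bigsqcup_{\omega}\fX_M/W(M,\rho)$, and a point of $\fX_M/W(M,\rho)$ is exactly a $G$-conjugacy class of cuspidal data $(M,\rho\otimes\chi)$, so the assignment sending $i^G_M(\rho)$ to the central character $\chi_{(M,\rho)}$ realizes the stated bijection, compatibly with the infinitesimal character map $\inf$.

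The one genuine obstacle is the first item: every other step is routine block-theoretic bookkeeping, but second adjointness is hard, and it is the ingredient I would take from the literature rather than reproduce.
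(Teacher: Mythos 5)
Your proposal is correct and consistent with what the paper does: Theorem \ref{thm:cent:dec} is stated there purely as a recollection of Bernstein's structure theory, with no proof given beyond citations to \cite{Ber_cent,Ber_sec,Ber_padic,Bus} (and the surveys \cite{AS_hom,AAG}). Your sketch of the standard derivation --- second adjointness taken from the literature, projectivity of $\Psi_G(M,\rho)$ via exactness of $r^G_M$, block orthogonality via the geometric lemma, and the identification of $\Theta(G)$ with inertial orbits of cuspidal data --- is the accepted route and adds nothing that conflicts with the paper's treatment.
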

	\Rami{The following gives a description of the algebra \RamiI{$\End_G(\Psi_{G}(M,\rho)))$}  and its center.}
\begin{theorem}\label{thm:cent:blk}
	$ $
		\begin{enumerate}
			\item \label{thm:ber:azu0}  
			Let $\rho$ be \RamiG{an irreducible cuspidal} representation of $G$, and set \Eitan{$ \fI_\rho:=\{\psi\in\fX_G|\psi \rho\simeq \rho\}$}  and embed $ {O}(\fX_G)$ in $\cR_{(G,\rho)}:=\End(\Psi_{G}(G,\rho))$.
			Then there exist\RamiE{s} a decomposition $$\cR_{(G,\rho)}= \bigoplus_{\psi\in \fI_\rho}O(\fX_G)\nu_\psi,$$ \RamiG{such that}  $\nu_\psi f=f_\psi \nu_\psi $ and $\nu_\psi\nu_{\psi'}=c_{\psi,\psi'}\nu_{\psi\psi'}$ where $f_\psi$ is  the translation of $f\in O(\fX_G)$ by $\psi$ and  $c_{\psi,\psi'}$ are scalars. 
			
			In particular $Z(\cR_{(G,\rho)}) \cong O(\fX_G)^{\fI_\rho}\cong O(\fX_G/\fI_\rho)$
			\item \label{thm:ber:ind} 
			\Rami{For a cuspidal data $(\rho,M)$,
			t}he natural embeding $$ End_M(\Psi_{M}(M,\rho))) \to End_G(\Psi_{G}(M,\rho))$$ gives an isomorphism
			$$ Z(End_M(\Psi_{M}(M,\rho)))^{W_{M,\rho,G}} \cong Z(End_G(\Psi_{G}(M,\rho)))$$ where $W_{M,\rho, G}=\{g\in G|g M g^{-1}=M; \, \rho|_{\EitanF{M^0}}\circ Ad(g)\cong  \rho|_{\RamiM{M^0}}\}$. cf. \cite[pp 73-74]{Ber_padic}.

		\end{enumerate}
		
	\end{theorem}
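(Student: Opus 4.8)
The plan is to prove the two parts separately: part (1) by an explicit computation of $\cR_{(G,\rho)}=\End(\Psi_{G}(G,\rho))$, and part (2) by the adjunction of Theorem~\ref{thm:cent:dec} together with the Bernstein--Zelevinsky geometric lemma. For part (1) note that $M=G$, so $\Psi_{G}(G,\rho)=\rho\otimes O(\fX_G)$ with $G$ acting diagonally ($\rho$ on the first factor, the regular representation of $G/G^{0}$ on $O(\fX_G)=\C[G/G^{0}]$ on the second). First I would exhibit the two families of elements of $\cR_{(G,\rho)}$: the copy of $O(\fX_G)$ consisting of the $G$-equivariant multiplication operators on the second factor — these are exactly the $O(\fX_G)$-linear endomorphisms — and, for each $\psi\in\fI_\rho$, the operator $\nu_\psi$ obtained by composing a Schur-unique isomorphism $\rho\xrightarrow{\sim}\psi\otimes\rho$ with the reparametrization isomorphism $(\psi\otimes\rho)\otimes O(\fX_G)\cong\rho\otimes O(\fX_G)$ given by translation by $\psi$ on $\fX_G$. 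The relations $\nu_\psi f=f_\psi\nu_\psi$ and $\nu_\psi\nu_{\psi'}=c_{\psi,\psi'}\nu_{\psi\psi'}$ are then immediate: the first from the translation description, the second because both sides are $G$-isomorphisms realizing the same twist, hence proportional by Schur's lemma.

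The substantive point of part (1) is that the $\nu_\psi$, $\psi\in\fI_\rho$, span $\cR_{(G,\rho)}$ over $O(\fX_G)$. The idea is to view $\cR_{(G,\rho)}$ as a module over $O(\fX_G)$ via left multiplication and compute its fibres. Since the irreducible cuspidal representation $\rho$, and hence every twist $\psi'\otimes\rho$, is projective in its Bernstein block, the functor $\Hom_G(\psi'\otimes\rho,-)$ is exact, and using cuspidality one computes
\[
\Hom_G\bigl(\psi'\otimes\rho,\ \rho\otimes O(\fX_G)\bigr)\ \cong\ \textstyle\bigoplus_{\{\psi\in\fX_G\,:\,\psi'\psi^{-1}\in\fI_\rho\}}\C,
\]
a space of dimension $|\fI_\rho|$ (a fibre $\Hom_G(\psi'\otimes\rho,\psi''\otimes\rho)$ is nonzero only when $\psi'(\psi'')^{-1}\in\fI_\rho$, by Schur and cuspidality). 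Specializing $\cR_{(G,\rho)}$ at the point $\psi'$ recovers exactly this Hom-space, so $\cR_{(G,\rho)}$ is a finitely generated $O(\fX_G)$-module all of whose fibres have dimension $|\fI_\rho|$; generic freeness plus this constancy makes it projective of rank $|\fI_\rho|$, and the $\nu_\psi$ — which specialize to the displayed basis — form an $O(\fX_G)$-basis. The center is then read off from the twisted group algebra structure: if $x=\sum_{\psi\in\fI_\rho}f_\psi\nu_\psi$ commutes with every element of $O(\fX_G)$ then $f_\psi=0$ for $\psi\neq 1$ (because $g\mapsto g_{\psi^{-1}}$ is a nontrivial translation for $\psi\neq1$), and if $f_1\in O(\fX_G)$ commutes with every $\nu_\psi$ then $f_1$ is $\fI_\rho$-invariant; as $\fI_\rho\subset\fX_G$ is finite this gives $Z(\cR_{(G,\rho)})=O(\fX_G)^{\fI_\rho}=O(\fX_G/\fI_\rho)$.

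For part (2), write $A:=\End_M(\Psi_{M}(M,\rho))$ and $\Gamma:=W_{M,\rho,G}/M$, a finite group. The plan is to identify $\End_G(\Psi_{G}(M,\rho))$ with a crossed product $A\rtimes\Gamma$, possibly twisted by a $2$-cocycle, and then compute its center. Using the adjunction between $\overline i^G_M$ and $r^G_M$ (Theorem~\ref{thm:cent:dec}),
\[
\End_G\bigl(\overline i^G_M\Psi_{M}(M,\rho)\bigr)\ \cong\ \Hom_M\bigl(\Psi_{M}(M,\rho),\ r^G_M\overline i^G_M\Psi_{M}(M,\rho)\bigr),
\]
and the geometric lemma filters $r^G_M\overline i^G_M\Psi_{M}(M,\rho)$ by double cosets in the relative Weyl group. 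A double coset survives under $\Hom_M(\Psi_{M}(M,\rho),-)$ only if its subquotient lies in the Bernstein block of $M$ generated by $\rho$, which by cuspidality of $\rho$ forces a representative $w$ to normalize $M$ and to satisfy $w\rho\cong\chi\otimes\rho$ for some $\chi\in\fX_M$, i.e.\ $w\in W_{M,\rho,G}$; each such $w$ then contributes a single reparametrized copy of $\Psi_{M}(M,\rho)$. Collecting terms yields $\End_G(\Psi_{G}(M,\rho))\cong\bigoplus_{w\in\Gamma}A\,u_w$ with $u_w a={}^{w}\!a\,u_w$ and $u_w u_{w'}=\sigma(w,w')u_{ww'}$, the induced $\Gamma$-action on $Z(A)=O(\fX_M/\fI^{M}_\rho)$ being the action of $W_{M,\rho,G}$ on $\fX_M$. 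A now-routine crossed-product computation — for $x=\sum_w a_w u_w$ central, commuting with $A$ kills $a_w$ for $w\neq e$ and puts $a_e\in Z(A)$, then commuting with the $u_{w}$ forces $a_e\in Z(A)^{\Gamma}$ — gives $Z(\End_G(\Psi_{G}(M,\rho)))=Z(A)^{\Gamma}=Z(\End_M(\Psi_{M}(M,\rho)))^{W_{M,\rho,G}}$, in agreement with \cite[pp.~73--74]{Ber_padic}.

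The main obstacle in part (1) is the spanning statement — verifying that $\cR_{(G,\rho)}$ is \emph{exactly} $\bigoplus_{\psi\in\fI_\rho}O(\fX_G)\nu_\psi$ and not larger — which rests on projectivity of cuspidal representations in their block and on the flatness/generic-freeness input above. In part (2) the delicate points are the geometric-lemma bookkeeping (pinning down which double cosets survive and that each contributes a single reparametrized copy of $\Psi_{M}(M,\rho)$), and having enough control over the $2$-cocycle $\sigma$ — really, faithfulness of the $\Gamma$-action on $Z(A)$ — to exclude spurious central elements of the crossed product.
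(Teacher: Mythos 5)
First, a point of comparison: the paper does not prove Theorem \ref{thm:cent:blk} at all --- it is stated as part of a summary of known results, with references to \cite{Ber_cent,Ber_sec,Ber_padic,Bus} and to \cite[Theorem 2.5]{AS_hom}, \cite[\S 2.1]{AAG} for the precise attributions. So you are reconstructing the standard proofs from Bernstein's notes rather than paralleling anything internal to this paper. Your overall architecture (twisted group algebra over $O(\fX_G)$ in the cuspidal case; crossed product by $W_{M,\rho,G}/M$ via second adjunction and the geometric lemma in general) is the right one.

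There is, however, a genuine error at the step you yourself identify as the substantive point of part (1). The displayed isomorphism
$\Hom_G\bigl(\psi'\otimes\rho,\ \rho\otimes O(\fX_G)\bigr)\cong\bigoplus\C$
is false: already for $G=F^\times$ and $\rho$ trivial, $\rho\otimes O(\fX_G)\cong\C[x^{\pm1}]$ with a generator of $G/G^{0}$ acting by multiplication by $x$, and a $G$-map from a character into this module would be a translation-eigenvector of the lattice $G/G^{0}$ acting on its group algebra; no such (finitely supported) eigenvector exists, so the $\Hom$ space is $0$, not $\C^{|\fI_\rho|}$. The underlying confusion is directional: the fibre of the $O(\fX_G)$-module $\cR_{(G,\rho)}$ at $\psi'$ is computed by mapping \emph{out of} the projective generator, $\Hom_G\bigl(\rho\otimes O(\fX_G),\,\psi'\otimes\rho\bigr)$ (equivalently by tensoring $\cR_{(G,\rho)}\otimes_{O(\fX_G)}\C_{\psi'}$), not by mapping into $\rho\otimes O(\fX_G)$, and the big module is not the direct sum of its fibres. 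The clean repair is to use Frobenius reciprocity for compact induction from the open normal subgroup $G^{0}$: $\cR_{(G,\rho)}=\End_G(ind_{G^0}^G(\rho|_{G^0}))\cong\Hom_{G^0}\bigl(\rho|_{G^0},\,\rho|_{G^0}\otimes\C[G/G^{0}]\bigr)$, which one then decomposes by Clifford--Mackey theory for the pair $(G,G^{0})$; this is where $|\fI_\rho|$ actually enters, and it produces exactly the elements $\nu_\psi$ you constructed. Part (2) is a correct plan at the level of a sketch, but note that the two points you flag there are real obligations, not formalities: the geometric lemma yields only a filtration of $r^G_M\overline i^G_M\Psi_M(M,\rho)$, and one must argue (using cuspidality of $\rho$ and projectivity of $\Psi_M(M,\rho)$ in its block) that after applying $\Hom_M(\Psi_M(M,\rho),-)$ the surviving strata split off, and separately that $W_{M,\rho,G}/M$ acts faithfully on $Z(\End_M(\Psi_M(M,\rho)))$ so that the crossed-product center computation closes.
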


\subsection{\Rami{Splitting subgroups}}

	\begin{theorem}\label{thm:cent:spt}
		$ $
		\begin{enumerate}
			\item\label{Ber.cent:basis}
			There exists a local base $\mathcal{B}$ of the topology at the unit $e \in G,$ such that every $K \in \cB$ is a compact (open) subgroup satisfying the following:
			
			\begin{enumerate}
				
				\item \label{Ber.cent:1}
				
				The category $\cM(G,K)$ of representations generated by their $K$-fixed vectors is a direct summand of \EitanD{the category} $\cM(G).$
				
				\item \label{Ber.cent:2}
				
				The functor $V \to V^{K}$ is an equivalence of categories from $\cM(G,K)$ to the category of modules over the Hecke algebra  $\cH(G,K)$.
				\item\label{Ber.cent:not} The algebra  $\cH(G,K)$ is Noetherian.
				
				\item For any \RamiJ{(standard)} Levi subgroup $M \subset G$ and for any $V \in \cM(G)$ the map
				$$V^K\to r_{M}^G(V)^{K\cap M}$$ is onto.
			\end{enumerate}
			We will call an open compact subgroup $K$ {satisfying} these properties, a splitting subgroup.

			\item\label{Ber.cent:spt4}  For any splitting subgroup  $K\subset G$ we have $$\cM(G,K)=\bigoplus_{\omega\in \Omega_K} \omega$$ for some \EitanD{finite} subset $\Omega_K \subset \Omega$\EitanF{.}
            \item \label{Ber.cent:5}We have $$\bigcup_{K\in \cB} \Omega_K=\Omega$$
			
		\end{enumerate}
		
	\end{theorem}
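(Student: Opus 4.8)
\emph{Proof strategy.} All of these assertions belong to the structure theory developed in \cite{Ber_cent,Ber_sec,Ber_padic,Bus} (for an itemized list of references see \cite[Theorem 2.5]{AS_hom} and \cite[\S\S 2.1]{AAG}); the plan is to explain how they are assembled, using only the Bernstein decomposition already recorded in Theorems \ref{thm:cent:dec} and \ref{thm:cent:blk}. I would first construct $\cB$, then establish item (1)(d), then prove the decomposition $\cM(G,K)=\prod_{\omega\in\Omega_K}\omega$ (which is the substance of \ref{Ber.cent:1} and \ref{Ber.cent:spt4}), and finally deduce \ref{Ber.cent:2}, \ref{Ber.cent:not} and \ref{Ber.cent:5} formally from it.

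To build $\cB$: fix a smooth model of ${\bf G}$ over the ring of integers of $F$ and let $\{K_m\}_{m\ge 1}$ be the associated descending chain of principal congruence subgroups. For $m$ in a stable range each $K_m$ admits an Iwahori factorization $K_m=(K_m\cap U_{\bar P})(K_m\cap M)(K_m\cap U_P)$ with respect to every parabolic $P=MU_P\supset{\bf T}$, and for each such $P$ there is an element of $M$, strictly $P$-dominant, contracting $K_m\cap U_P$ strictly into itself. Take for $\cB$ all $G$-conjugates of all such $K_m$; this is a neighbourhood base at $e$. The Iwahori factorization is precisely the hypothesis of the Casselman lemma computing Jacquet modules of fixed vectors, which produces the surjections $V^K\twoheadrightarrow r^G_M(V)^{K\cap M}$ (and, by symmetry of the factorization, their analogues for opposite parabolics) required by item (1)(d).

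The heart of the matter is the equality $\cM(G,K)=\prod_{\omega\in\Omega_K}\omega$, where $\Omega_K$ is the set of Bernstein components whose cuspidal datum $(M,\rho)$ satisfies $\rho^{K\cap M}\neq 0$; this set is finite since there are finitely many Levi subgroups and, for each, finitely many inertial orbits of irreducible cuspidal representations of bounded level. For the inclusion $\subseteq$ I would use that $V\mapsto V^K$ is exact (we are in characteristic zero): any nonzero $V\in\cM(G,K)$ has an irreducible subquotient with nonzero $K$-invariants, whose component thus lies in $\Omega_K$, and this is compatible with the block decomposition of $V$. The reverse inclusion amounts to showing that every nonzero object of a component $\omega\in\Omega_K$ is generated by its $K$-fixed vectors, equivalently has nonzero $K$-invariants; this is where the real input enters. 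An irreducible $\pi\in\omega$ embeds into a parabolic induction, with respect to some parabolic with Levi $M$, of $\rho\otimes\chi$ for a suitable unramified $\chi$, so by Frobenius reciprocity $\rho\otimes\chi$ is a quotient of the corresponding Jacquet module of $\pi$; combining this with item (1)(d) gives a chain of surjections $\pi^K\twoheadrightarrow r^G_M(\pi)^{K\cap M}\twoheadrightarrow(\rho\otimes\chi)^{K\cap M}=\rho^{K\cap M}\neq 0$. This step --- which rests on Bernstein's second adjointness (item \ref{Ber.cent:2.5} of Theorem \ref{thm:cent:dec}), on the Iwahori factorization, and on the finiteness of bounded-level cuspidal data --- is the one I expect to be the main obstacle; everything else is bookkeeping.

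Granting $\cM(G,K)=\prod_{\omega\in\Omega_K}\omega$, the remaining items are formal. For \ref{Ber.cent:2}: the idempotent $e_K=\mathbf{1}_K/\mathrm{vol}(K)$ is full in the two-sided summand of $\cH(G)$ cutting out $\prod_{\omega\in\Omega_K}\omega$ --- full precisely because no irreducible object of these components is annihilated by $e_K$ --- so Morita theory yields the equivalence $V\mapsto V^K$ from $\cM(G,K)$ onto $(e_K\cH(G)e_K)$-mod${}=\cH(G,K)$-mod. For \ref{Ber.cent:not}: consequently $\cH(G,K)$ is module-finite over its centre, which by Theorem \ref{thm:cent:blk} is a finite product of finitely generated commutative $\C$-algebras, so $\cH(G,K)$ is Noetherian. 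For \ref{Ber.cent:spt4} applied to an arbitrary splitting subgroup $K$ (not only $K\in\cB$): property \ref{Ber.cent:1} forces $\cM(G,K)$ to be a sub-product of $\cM(G)=\prod_\omega\omega$, necessarily of the form $\prod_{\omega\in\Omega_K}\omega$, and $\Omega_K$ is finite because $e_K\in\cH(G)$ is supported on only finitely many components. For \ref{Ber.cent:5}: any component $\omega$ with cuspidal datum $(M,\rho)$ lies in $\Omega_K$ once $K\in\cB$ is small enough that $K\cap M$ fixes a nonzero vector of $\rho$, and such $K$ exist since $\{K\cap M:K\in\cB\}$ is a neighbourhood base at $e$ in $M$.
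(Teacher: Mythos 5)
The paper does not prove this theorem at all: it is explicitly presented, together with Theorems \ref{thm:cent:dec} and \ref{thm:cent:blk}, as a summary of results from \cite{Ber_cent,Ber_sec,Ber_padic,Bus}, with \cite[Theorem 2.5]{AS_hom} and \cite[\S\S 2.1]{AAG} pointed to for item-by-item references. So there is nothing in the paper to compare your argument against line by line; what can be said is that your sketch is a faithful reconstruction of the standard route in the literature. You correctly isolate the genuinely deep inputs rather than hiding them: the existence of arbitrarily small compact open subgroups with Iwahori factorization with respect to all standard parabolics (giving (1)(d) via Jacquet--Casselman), the finiteness of inertial classes of cuspidal data of bounded level (uniform admissibility, needed for the finiteness of $\Omega_K$), and second adjointness together with (1)(d) to show that every irreducible in a component of $\Omega_K$ has nonzero $K$-invariants --- from which, by exactness of $V\mapsto V^K$ and the existence of irreducible subquotients of cyclic submodules, one gets that every object of such a component is generated by its $K$-fixed vectors. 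The deduction of \ref{Ber.cent:2} by Morita theory from fullness of $e_K$, of \ref{Ber.cent:not} from module-finiteness over the finitely generated centre (Theorem \ref{thm:cent:blk}), and of \ref{Ber.cent:spt4}--\ref{Ber.cent:5} as bookkeeping, all match the standard treatment. The only places where your sketch leans on unproved assertions of comparable depth to what it is proving are the bounded-level finiteness of cuspidals and the claim that a direct-summand subcategory of $\prod_\omega\omega$ is automatically a sub-product of blocks; both are true and both are themselves part of the cited Bernstein theory, so for a background theorem of this kind that is acceptable.
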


\subsection{\Rami{Relation to harmonic analysis}}	
We recall the following definition of pairs of finite types and some of their properties.
\begin{definition}[{cf. \cite[\S 3]{AS_hom}}]\label{def:ft}

Let $H\subset G$ be a closed subgroup.
We say that $(G,H)$ is of finite type if for any character $\chi:H \to \C^{\times}$ and any $\pi\in \Irr(G)$   we have $$\dim((\pi^*)^{H,\chi})<\infty$$
\end{definition}
\begin{thm}[{\cite[Theorem B.0.2]{AGS_Z}}]\label{thm:ft}
If $(G,H)$ is of finite type  then  for any  character $\chi:H \to \C^{\times}$ 
and any compact open $K < G$ the module  $ind_H^G(\chi)^K$ is finitely generated $\cH(G,K)$-module.
\end{thm}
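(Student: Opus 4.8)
The plan is to use the Bernstein decomposition to reduce the assertion to a statement about a single block, then to finite generation of a module over the Bernstein center, which one proves by induction on the semisimple rank of $\bfG$ using the geometric lemma.

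\textbf{Reduction to splitting subgroups and to single blocks.} Given an arbitrary compact open $K<G$, choose a splitting subgroup $K_0\subseteq K$ (Theorem \ref{thm:cent:spt}). Since $ind_H^G(\chi)^K=e_K\cdot ind_H^G(\chi)^{K_0}$ and $\cH(G,K)=e_K\,\cH(G,K_0)\,e_K$ with $\cH(G,K_0)$ Noetherian, finite generation over $\cH(G,K_0)$ implies finite generation over $\cH(G,K)$ (for a Noetherian ring $R$, an idempotent $e\in R$ and a finitely generated $R$-module $N$, the $eRe$-module $eN$ is finitely generated). So assume $K$ is splitting. By Theorem \ref{thm:cent:spt} the category $\cM(G,K)=\bigoplus_{\omega\in\Omega_K}\omega$ is the sum of finitely many blocks and $V\mapsto V^K$ is an exact equivalence $\cM(G,K)\xrightarrow{\sim}\cH(G,K)\text{-mod}$; one checks directly that this equivalence carries finitely generated representations to finitely generated modules and conversely (using that $\cH(G)e_K$ is a cyclic $G$-representation). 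Since $ind_H^G(\chi)^K=\big(\bigoplus_{\omega\in\Omega_K}(ind_H^G\chi)_\omega\big)^K$ and $\Omega_K$ is finite, it suffices to prove that the block component $(ind_H^G\chi)_\omega$ is a finitely generated $G$-representation for every $\omega\in\Omega$.

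\textbf{Passage to the Bernstein center.} Fix $\omega$ with cuspidal datum $(M,\rho)$, projective generator $P_\omega=\Psi_{G}(M,\rho)=\overline i^G_M(\rho\otimes O(\fX_M))$, let $A_\omega=\End_G(P_\omega)$ and let $Z_\omega=Z(A_\omega)\cong O(\fX_M)^{W_{M,\rho,G}}$ — a finitely generated $\C$-algebra over which $A_\omega$ is module-finite (Theorem \ref{thm:cent:blk}). The functor $F_\omega=\Hom_G(P_\omega,-)$ is an equivalence $\omega\simeq A_\omega\text{-mod}$, so $(ind_H^G\chi)_\omega$ is finitely generated over $G$ iff $N_\omega:=\Hom_G(P_\omega,ind_H^G\chi)$ is finitely generated over $A_\omega$, and by module-finiteness iff it is finitely generated over $Z_\omega$. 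Writing $ind_H^G(\chi)$ as the filtered union of its finitely generated subrepresentations and using that $P_\omega$ is a finitely generated projective, $N_\omega$ is the filtered union of coherent $Z_\omega$-modules, so the task is to show that this union stabilizes. The finite type hypothesis enters here: by Frobenius reciprocity $\Hom_G(ind_H^G\chi,\pi)$ is, up to a modulus twist, the space $((\tilde\pi)^*)^{H,\chi'}$, which is finite-dimensional for every $\pi\in\Irr(G)$; passing through $F_\omega$ and the generically Azumaya structure of $A_\omega$ over $Z_\omega$, this says exactly that $N_\omega\otimes_{Z_\omega}\kappa(v)$ is finite-dimensional for every closed point $v\in\Spec Z_\omega$.

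\textbf{Induction via the geometric lemma, and the main obstacle.} Finite-dimensionality of all fibers does not by itself force a $Z_\omega$-module to be coherent, so the structure of $ind_H^G(\chi)$ must be used; the plan is induction on the semisimple rank of $\bfG$. If $M\subsetneq G$ is a proper Levi, second adjointness (Theorem \ref{thm:cent:dec}\eqref{Ber.cent:2.5}) gives $N_\omega=\Hom_M\big(\rho\otimes O(\fX_M),\,r^G_M(ind_H^G\chi)\big)$, and the Bernstein–Zelevinsky geometric lemma, applied to the stratification of $\bar P\backslash G/H$ (which is finite, since $(G,H)$ is of finite type), endows $r^G_M(ind_H^G\chi)$ with a finite filtration whose subquotients are compact inductions $ind_{H'}^M(\chi')$ from closed subgroups $H'\subseteq M$ with $(M,H')$ again of finite type (inherited from $(G,H)$ through parabolic induction); by the inductive hypothesis each $\Hom_M(\rho\otimes O(\fX_M),ind_{H'}^M\chi')$, hence $N_\omega$, is finitely generated over $Z_\omega$. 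The base of the induction — equivalently, the cuspidal blocks, where $M=G$ — is the main obstacle: there no proper Levi can be peeled off, and one must show directly that $N_\omega=\Hom_G(\rho\otimes O(\fX_G),ind_H^G\chi)$ is finitely generated over $Z_\omega$. For this one combines the fiber bound from finite type ($\Hom_G(\rho\otimes\psi,ind_H^G\chi)\hookrightarrow((\rho\otimes\psi)^*)^{H,\chi''}$, finite-dimensional) with the point that ``compactly supported modulo $H$'' is a coherent condition on the family $\{\rho\otimes\psi\}_{\psi\in\fX_G}$, so that $N_\omega$ is a coherent subsheaf of the $O(\fX_G)$-module $v\mapsto\Hom_G(\rho\otimes\psi,Ind_H^G\chi)$; the Cohen–Macaulay property of $\End_G(P_\omega)$ over $Z_\omega$ (Theorem \ref{thm:CM}) is convenient here to exclude embedded-prime pathologies. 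Reassembling the finitely many blocks in $\Omega_K$ then finishes the proof.
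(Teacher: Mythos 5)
First, a point of reference: the paper does not prove this statement at all — it is quoted from \cite[Theorem B.0.2]{AGS_Z} — so your proposal can only be judged on its own merits. Your opening reduction is sound: the idempotent argument passing from an arbitrary compact open $K$ to a splitting subgroup, the transfer of finite generation through the equivalence $V\mapsto V^K$, and the reformulation as finite generation of each block component $(ind_H^G\chi)_\omega$ (which is Corollary \ref{cor:fg} read in reverse) are all correct, as is the further translation to finite generation of $N_\omega=\Hom_G(\Psi_G(M,\rho),ind_H^G\chi)$ over $Z_\omega$. You are also right that the finite-type hypothesis only controls the fibres of $N_\omega$ and that finite-dimensional fibres alone never force coherence; identifying this as the crux is the correct diagnosis.

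The difficulty is that the two steps meant to overcome this crux both have genuine gaps. (i) You apply the geometric lemma to $r^G_M(ind_H^G\chi)$ and assert that the relevant double coset space is finite ``since $(G,H)$ is of finite type.'' Finiteness of $P\backslash G/H$ is a geometric statement that is not part of Definition \ref{def:ft} and does not follow from it; without finitely many locally closed orbits there is no finite Bernstein--Zelevinsky filtration by compact inductions, and the induction on the Levi does not start. (ii) Even granting such a filtration, the claim that each resulting pair $(M,H')$ is again of finite type is itself a nontrivial hereditary statement: it asks for finite multiplicity of $(H',\chi')$-functionals on \emph{all} irreducibles of $M$, and $\Hom$ out of a sub-object of $r^G_M(ind_H^G\chi)$ does not embed into $\Hom$ out of the whole, so this does not follow formally from the hypothesis on $(G,H)$. (iii) Most seriously, the cuspidal base case — which you yourself flag as the main obstacle — is not actually proved. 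The sentence ``compactly supported modulo $H$ is a coherent condition'' is a restatement of the desired conclusion, not an argument, and the ambient family $v\mapsto\Hom_G(\rho\otimes\psi,Ind_H^G\chi)$ is not shown to be (and in general is not) a coherent $O(\fX_G)$-module, so there is no coherent sheaf for $N_\omega$ to sit inside; the Cohen--Macaulay property of $\End_G(P_\omega)$ cannot substitute for this. As written the proposal is a plausible strategy outline, not a proof; you should either import the actual argument of \cite[Appendix B]{AGS_Z} or close these three points explicitly.
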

\begin{cor}\label{cor:fg}
If $G,H,\chi$ are as above, then for any $\omega \in \Omega(G)$ the representation $ind_H^G(\chi)_\omega$ 
is finitely generated.
\end{cor}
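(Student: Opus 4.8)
The plan is to deduce this directly from Theorem \ref{thm:ft} together with the Bernstein decomposition. Fix $\omega \in \Omega(G)$. By Theorem \ref{thm:cent:spt}\eqref{Ber.cent:5} there is a splitting subgroup $K \in \mathcal{B}$ with $\omega \in \Omega_K$, so by Theorem \ref{thm:cent:spt}\eqref{Ber.cent:spt4} the block $\omega$ is a direct summand of the category $\cM(G,K)$, and in particular a direct summand of $\cM(G)$. Write $ind_H^G(\chi)_\omega$ for the corresponding summand of $M:=ind_H^G(\chi)$. The first step is to observe that, since $\omega$ sits inside $\cM(G,K)$, every object of $\omega$ is generated by its $K$-fixed vectors, and the functor $V \mapsto V^K$ is an equivalence of $\cM(G,K)$ with $\cH(G,K)$-modules (Theorem \ref{thm:cent:spt}\eqref{Ber.cent:2}); under this equivalence, being finitely generated as a representation is the same as being finitely generated as an $\cH(G,K)$-module.

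Next I would invoke Theorem \ref{thm:ft}: since $(G,H)$ is of finite type, $M^K = ind_H^G(\chi)^K$ is a finitely generated $\cH(G,K)$-module. Now $(M_\omega)^K$ is a direct summand (as an $\cH(G,K)$-module) of $M^K = \bigoplus_{\omega' \in \Omega_K}(M_{\omega'})^K$, because the Bernstein decomposition of $\cM(G,K)$ corresponds to an idempotent decomposition of $\cH(G,K)$ (equivalently, $(M_\omega)^K = e_\omega M^K$ for the central idempotent $e_\omega$ cutting out the block $\omega$). A direct summand of a finitely generated module over any ring is finitely generated, so $(M_\omega)^K$ is a finitely generated $\cH(G,K)$-module. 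Transporting back through the equivalence of Theorem \ref{thm:cent:spt}\eqref{Ber.cent:2}, we conclude that $M_\omega = ind_H^G(\chi)_\omega$ is finitely generated as a representation of $G$.

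There is really no serious obstacle here — the statement is a routine packaging of Theorem \ref{thm:ft} through the Bernstein decomposition. The only point requiring a line of care is the identification of $(M_\omega)^K$ with a direct summand of $M^K$ as an $\cH(G,K)$-module; this follows because the decomposition $\cM(G,K) = \bigoplus_{\omega' \in \Omega_K}\omega'$ is a decomposition of categories and hence is realized by a decomposition of the identity functor, which on $K$-invariants is exactly multiplication by the orthogonal central idempotents of $\cH(G,K)$ associated to the blocks $\omega' \in \Omega_K$. (Alternatively one can argue purely categorically: $M_\omega$ is a direct summand of $M$ in $\cM(G)$, a quotient of a finitely generated object is finitely generated, and $M$ itself need not be finitely generated, so instead one passes through $K$-invariants where $M^K$ is finitely generated by Theorem \ref{thm:ft} and finite generation is inherited by summands.) I would write it the second way, keeping everything on the level of $\cH(G,K)$-modules, since that is where Theorem \ref{thm:ft} delivers finiteness.
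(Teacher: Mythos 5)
Your proof is correct and is exactly the argument the paper has in mind: the paper's proof simply cites Theorem \ref{thm:ft} together with Theorem \ref{thm:cent:spt}(\ref{Ber.cent:spt4},\ref{Ber.cent:5}) and declares the corollary immediate, and your write-up fills in precisely those details (choice of a splitting $K$ with $\omega\in\Omega_K$, passage to $K$-invariants, finite generation of a direct summand). No discrepancy to report.
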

\begin{proof}
This follows immediately from the previous theorem (Theorem \ref{thm:ft}) and from \Rami{Theorem \ref{thm:cent:spt}   
(\ref{Ber.cent:spt4}, \ref{Ber.cent:5}).}
\end{proof}
\Rami{
Theorem \ref{thm:BH} provides examples of pairs of finite type. 
}

\section{Cohen-Macaulay property of the category of smooth representations of reductive p-adic groups}\label{sec:CM}
In this section we prove {Theorem \ref{thm:CM}. \Rami{
We first fix conventions regarding the concept of Cohen-Macaulay modules. \RamiJ{
We use the notion of Cohen-Macaulay modules over local rings, and their dimension. Since the dimension of a module  over non-local ring might vary from point to point, there are  several useful version of the notion of Cohen-Macaulay module in this more general situation. In the following we consider some of these versions.}

\begin{definition}\label{def:cm}
Let $A$ be a commutative unital \EitanE{$\C$-algebra} and let $V$ be an $A$-module.
\begin{itemize}
    \item We say that $V$ is locally Cohen-Macaulay if for any $p \in \Spec(A)$, the module $V_{p}$ is Cohen-Macaulay over $A_{p}.$
    \item We say that $V$ is 
    \RamiJ{equi-dimensional}  Cohen-Macaulay 
    \RamiJ{module}
    if there is an integer $d \geq 0$ such that for any $p \in \Spec(A)$, the module $V_{p}$ is Cohen-Macaulay module of dimension $d$ over $A_{p}.$
    \item We say that $V$ is Cohen-Macaulay of full dimension if 
    for any $p \in \Spec(A)$, the module $V_{p}$ is Cohen-Macaulay module of dimension $\dim_{p}(\Spec(A)).$
    \item We say that $V$ is component\RamiJ{-}wise Cohen-Macaulay if for any \RamiI{connected} component $X \subset \Spec(A)$ the restriction $M|_{X}$ is \RamiJ{equi-dimensional Cohen-Macaulay module} over $O_X(X).$
\end{itemize}
\end{definition}

We note that the following implications holds:
\begin{itemize}
    \item \RamiJ{equi-dimensional} Cohen-Macaulay module is component\RamiJ{-}wise Cohen-Macaulay module. 
    \item Component wise Cohen-Macaulay module is locally Cohen-Macaulay module.
    \item Full dimension Cohen-Macaulay module is component\RamiJ{-}wise Cohen-Macaulay.  
\end{itemize}
}
Let us recall the formulation of Theorem \ref{thm:CM}}:
\begin{theorem}[{See also \cite[Proposition 3.1]{BBK}}]\label{thm:CM2}
The category $\cM(G)$ of smooth representations of $G$ is \CM, that is, for any finitely generated projective module $P\in \cM(G)$ the algebra \Rami{
$\End(P)$ is a \CM\ module of full dimension over its center.}  
\end{theorem}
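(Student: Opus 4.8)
The plan is to reduce the global statement to a statement about each Bernstein block, and within each block to the classical structure theory of $\End_G(\Psi_G(M,\rho))$ as a module over its center. First I would observe that, by the Bernstein decomposition $\cM(G)=\prod_{\omega\in\Omega}\omega$ (Theorem \ref{thm:cent:dec}\eqref{Ber.cent:4}), any finitely generated projective $P$ decomposes as $P=\bigoplus_{\omega\in S}P_\omega$ with $S\subset\Omega$ finite, and $\End(P)=\bigoplus_{\omega\in S}\End(P_\omega)$ as a ring, hence $Z(\End(P))=\prod_{\omega\in S}Z(\End(P_\omega))$. Since $\Spec$ of a finite product is the disjoint union of the spectra, and being Cohen-Macaulay of full dimension is a condition checked locally at each prime, it suffices to treat a single block $\omega$. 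Thus I may assume $P$ is a finitely generated projective object in a single block $\omega=\omega_{(M,\rho)}$.

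Next I would use that within the block $\omega_{(M,\rho)}$, the object $\Psi_G(M,\rho)$ is a finitely generated projective generator (Theorem \ref{thm:cent:dec}\eqref{Ber.cent:3}), so $\omega$ is equivalent to the category of finitely generated modules over $\cR:=\End_G(\Psi_G(M,\rho))$, and every finitely generated projective $P\in\omega$ corresponds to a finitely generated projective $\cR$-module; then $\End(P)=\End_{\cR}(\wt P)$ for the corresponding projective module $\wt P$. Now I invoke Theorem \ref{thm:cent:blk}: by \eqref{thm:ber:azu0} and \eqref{thm:ber:ind}, the center $Z(\cR)$ is (isomorphic to) a ring of regular functions on an affine algebraic variety of the form $\fX_M/W_{M,\rho,G}$, which is in particular a finitely generated reduced $\C$-algebra that is a quotient of a polynomial ring by a torus action — hence Cohen-Macaulay and equidimensional; moreover the structure $\cR=\bigoplus_{\psi\in\fI_\rho}O(\fX_M)\nu_\psi$ exhibits $\cR$ (and hence $Z(\cR)$, and hence $O(\fX_M)$) all as finite modules over $Z(\cR)$, with $O(\fX_M)$ free of full rank over $Z(\cR)=O(\fX_M)^{\fI_\rho}$ (this is the standard fact that for a finite group acting on a smooth affine variety, the coordinate ring is a Cohen-Macaulay — indeed maximal Cohen-Macaulay — module over the invariant subring). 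Tensoring up the Levi case along $i_M^G$ and using \eqref{thm:ber:ind} reduces the general $(M,\rho)$ case to the cuspidal case $(G,\rho)$ handled by \eqref{thm:ber:azu0}.

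The remaining step is: $\cR$ is a ring, finite as a module over its Noetherian center $Z=Z(\cR)$, with the property that $\cR$ is a maximal Cohen-Macaulay $Z$-module (it is a direct sum of copies of $O(\fX_M)$, each MCM over $Z$); and $\wt P$ is a finitely generated projective $\cR$-module, hence a direct summand of $\cR^n$ for some $n$, hence itself MCM over $Z$. Finally $\End_{\cR}(\wt P)$ is a direct summand (as a $Z$-module, even as a $\cR$-bimodule) of $\End_{\cR}(\cR^n)\cong\cR^{n^2}$ — concretely, $\End_\cR(\wt P)\oplus\Hom_\cR(\wt P,\wt Q)\oplus\Hom_\cR(\wt Q,\wt P)\oplus\End_\cR(\wt Q)=\End_\cR(\cR^n)$ if $\wt P\oplus\wt Q\cong\cR^n$ — so $\End(P)=\End_\cR(\wt P)$ is a direct summand of an MCM $Z$-module, hence itself MCM over $Z$, i.e. $(\End(P))_p$ is Cohen-Macaulay over $Z_p$ of dimension $\dim Z_p=\dim_p(\Spec Z)$ for every prime $p$ (using that $Z$ is equidimensional so the local dimension of $\Spec Z$ equals the local dimension of $\Spec(Z(\End(P)))$, which one checks since $Z\to Z(\End(P))$ is a finite extension with the same fraction field on each component). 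This gives full-dimension Cohen-Macaulayness.

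The main obstacle I expect is bookkeeping around the center: one must be careful that the \emph{center} of $\End(P)$, rather than $Z(\cR)$ itself, is the base ring in the definition of Cohen-Macaulay, and verify that $Z(\cR)\hookrightarrow Z(\End_\cR(\wt P))$ is a finite extension inducing a bijection on components with matching dimensions, so that being MCM over $Z(\cR)$ is equivalent to being MCM (of full dimension) over $Z(\End(P))$. This is a standard but slightly delicate commutative-algebra point (finite ring maps preserve Cohen-Macaulayness of modules and do not change dimension), and the genuinely nontrivial input — that $O(\fX_M)$ is a maximal Cohen-Macaulay module over $O(\fX_M)^{\fI_\rho}$ — is exactly the classical theorem on invariants of finite groups acting on smooth (here, toric) varieties.
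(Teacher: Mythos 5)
Your overall architecture is reasonable and partly parallels the paper: reduce to one Bernstein block, note that a finitely generated projective $P$ corresponds to a finitely generated projective module $\wt P$ over $\cR:=\End_G(\Psi_G(M,\rho))$, realize $\End_\cR(\wt P)$ as a direct summand of $\End_\cR(\cR^n)\cong \cR^{n^2}$ over the center, and transfer Cohen--Macaulayness along finite central extensions. The genuine gap is the key input you feed into this machine: you claim that $\cR=\bigoplus_{\psi\in\fI_\rho}O(\fX_M)\nu_\psi$, hence that $\cR$ is finite and maximal Cohen--Macaulay over its center. But Theorem \ref{thm:cent:blk}\eqref{thm:ber:azu0} gives that decomposition only in the cuspidal case, i.e. it describes $\End_M(\Psi_M(M,\rho))$ (equivalently $\End(\Psi_G(G,\rho))$ when $M=G$); for a proper Levi the algebra $\End_G(\Psi_G(M,\rho))$ is strictly larger, containing intertwining operators attached to $W_{M,\rho,G}$, and Theorem \ref{thm:cent:blk}\eqref{thm:ber:ind} identifies only its \emph{center} $\bigl(O(\fX_M)^{\fI_\rho}\bigr)^{W_{M,\rho,G}}$ (not $O(\fX_M)^{\fI_\rho}$, as you write), saying nothing about the module structure of $\cR$ over it. Your one sentence ``tensoring up the Levi case along $i_M^G$'' is not an argument: $\End_G(i_M^G(R))$ is not obtained from $\End_M(R)$ by any base change, and the assertion that it is finite/CM over its center is precisely the theorem in its essential case $P=\Psi_G(M,\rho)$ --- so as written the proposal is circular exactly where the work has to happen.

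The paper closes this gap by a different mechanism. With $R=ind_{M^0}^M(\rho|_{M^0})$, second adjointness gives $\End_G(i_M^G(R))\cong\Hom_M(R,Q)$ where $Q=\bar r_M^G(i_M^G(R))$; since $\bar r_M^G$ has an exact right adjoint it preserves projectives, so $Q$ is projective and (by finite generation of $R$) finitely generated, whence $Q=Q'\oplus Q''$ with $Q'$ a direct summand of $R^n$ and $\Hom_M(R,Q'')=0$. Therefore $\End_G(\Psi_G(M,\rho))$ is a direct summand of $\End_M(R)^n$ as a $\C[M/M^0]$-module, and $\End_M(R)$ is free over $O(\fX_M)=\C[M/M^0]$ by \eqref{thm:ber:azu0}, now legitimately applied with $M$ in place of $G$. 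The passage from Cohen--Macaulayness over $\C[M/M^0]$ to Cohen--Macaulayness of full dimension over the actual center is then done via Theorem \ref{thm:cent:blk}\eqref{thm:ber:ind} and \cite[Theorem 2.1]{BBG97}; that is the ``bookkeeping'' you correctly flagged, and your treatment of it (and of summands of MCM modules) is fine. What is missing is a correct argument for the structure of $\cR$ over its center in the non-cuspidal case; without it, or the adjunction trick above replacing it, the proof does not go through.
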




\begin{proof}
Using Theorem 	\EitanE{\ref{thm:cent:dec}(\ref{Ber.cent:3},\ref{Ber.cent:4})}, without loss of generality, we may assume that \EitanE{$P=\Psi_{G}(M,\rho) \cong i_M^G(ind_{M^0}^M(\rho|_{M^0}))$}, where $M<G$ is a Levi subgroup and $\rho\in \Irr(M)$ is a cuspidal representation. We have 
	\begin{align*}
	\End(P)&=\Hom_{G}(i_M^G(ind_{M^0}^M(\rho|_{M^0})),i_M^G(ind_{M^0}^M(\rho|_{M^0})))=\\&=
	\Hom_M(ind_{M^0}^M(\rho|_{M^0}),\bar r_M^G(i_M^G(ind_{M^0}^M(\rho|_{M^0}))))
	\end{align*}
	
	Let $R=ind_{M^0}^M(\rho|_{M^0})$ \EitanF{and} $Q:=\bar r_M^G(i_M^G(ind_{M^0}^M(\rho|_{M^0})))$.
	\EitanE{Note that $R$ is finitely generated and hence, by \cite[Proposition 33]{Ber_padic}, $Q$ is also finitely generated module.} By \Rami{Theorem \ref{thm:cent:blk}\eqref{thm:ber:ind}} and \cite[Theorem 2.1]{BBG97}, 
	it is enough to show that $\Hom_M(R,Q)$ is finitely generated Cohen-Macaulay module over the center of $\End_M(R)$. For this it is enough to show that 
	 $\Hom_M(R,Q)$ is finitely generated Cohen-Macaulay over $\C[M/M^0]$.
	By \Rami{Theorem \ref{thm:cent:dec}\RamiI{(\ref{Ber.cent:2.5}, \ref{Ber.cent:3})}} $Q$ is projective. Hence, by \Rami{Theorem \ref{thm:cent:dec}\eqref{Ber.cent:3}} we can decompose $Q=Q'\oplus Q''$ where $Q'$ is a direct summand  of $R^n$ (for some $n$) and $Hom_{\EitanF{M}} (R,Q'')=0$. Thus, by \cite[Theorem 2.1]{BBG97},
	it is enough to show that $\End_{M}(R)$ is finitely generated Cohen-Macaulay \Rami{module of full dimension} over $\C[M/M^0]$. This follows from  \Rami{Theorem \ref{thm:cent:blk}\eqref{thm:ber:azu0}}.
\end{proof}

\section{Projectivity of $ind_{U}^G(\psi)$ -- proof of Theorem
\ref{thm:proj}
}\label{sec:proj}




In this section we prove Theorem \ref{thm:proj}. For this we first  \RamiN{prove flatness of the module}  $ind_{U}^G(\psi)$.
\subsection{Flatness of $ind_{U}^G(\psi)$}\label{ssec:elad}

{
\begin{proposition}\label{thm0:flat}
    \RamiN{Let $L$ be an $\ell$-group and l}et $U<\RamiN{L}$ be a closed subgroup that is exhausted by its open compact subgroups.
	Let $\psi$ be  a character of $U$. Then $\cW:=ind_{U}^\RamiN{L}(\psi)$  is a flat $\cH(\RamiN{L})$-module.
\end{proposition}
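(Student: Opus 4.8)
The plan is to exhibit $\cW = ind_U^L(\psi)$ as a filtered colimit of flat $\cH(L)$-modules; since $\mathrm{Tor}^{\cH(L)}$ commutes with filtered colimits, flatness of $\cW$ then follows immediately.

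I would first organize the colimit over the set $\mathcal{V}$ of open compact subgroups of $U$, partially ordered by inclusion. That $\mathcal{V}$ is directed is the one place the hypothesis is used right away: given $V_1,V_2\in\mathcal{V}$, the subgroup $\langle V_1,V_2\rangle$ is open in $U$, and since $V_1\cup V_2$ is compact it is contained in a compact subgroup of $U$, so $\langle V_1,V_2\rangle$ is a compact open subgroup of $U$ containing both. For $V\subseteq V'$ in $\mathcal{V}$ there is an ``averaging-up'' map $\Phi_{V,V'}\colon ind_V^L(\psi|_V)\to ind_{V'}^L(\psi|_{V'})$ given by $\Phi_{V,V'}(f)(x)=\sum_{Vv\in V\backslash V'}\psi(v)^{-1}f(vx)$, and likewise a map $\alpha_V\colon ind_V^L(\psi|_V)\to\cW$ given by $\alpha_V(f)(x)=\sum_{Vu\in V\backslash U}\psi(u)^{-1}f(ux)$, the last sum being finite for each $x$ because $f$ is compactly supported modulo $V$ and $U$ is closed in $L$. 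One checks routinely that these maps are well defined, $\cH(L)$-linear (only left translations are involved), and compatible: $\Phi_{V',V''}\Phi_{V,V'}=\Phi_{V,V''}$ and $\alpha_{V'}\Phi_{V,V'}=\alpha_V$. They therefore induce an $\cH(L)$-module map $\alpha\colon\varinjlim_{V\in\mathcal{V}}ind_V^L(\psi|_V)\to\cW$.

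The main point — and, I expect, the main obstacle — is that $\alpha$ is an isomorphism. For surjectivity one uses that any $F\in\cW$, being smooth and compactly supported modulo $U$, is a finite linear combination of ``standard'' functions $f_{g,K}$ supported on a single double coset $UgK$ (for $g\in L$ and $K\subseteq L$ open compact; such an $f_{g,K}$ exists precisely when $\psi|_{U\cap gKg^{-1}}$ is trivial), and then one verifies $f_{g,K}=\alpha_V(\mathbf{1}_{VgK})$ for $V:=U\cap gKg^{-1}\in\mathcal{V}$. For injectivity, given $f\in ind_V^L(\psi|_V)$ with $\alpha_V(f)=0$, one fixes a level $K$ of $f$, writes its support in finitely many double cosets $Vg_1K,\dots,Vg_nK$ with the $Ug_iK$ pairwise distinct, uses directedness of $\mathcal{V}$ to choose $V'\supseteq V$ in $\mathcal{V}$ containing all of $U\cap g_iKg_i^{-1}$, and then a bookkeeping argument with these supports shows $\Phi_{V,V'}(f)=0$, i.e. $[f]=0$ in the colimit. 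This is the step that genuinely exploits the exhaustion hypothesis and where all the real work sits; the rest is formal.

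Finally I would check that every term of the colimit is flat. Since $V$ is compact, the smooth character $\psi|_V$ (necessarily of finite order) is a direct summand of the regular representation $C^\infty_c(V)=C^\infty(V)$ of $V$; hence, by induction in stages, $ind_V^L(\psi|_V)$ is a direct summand of $ind_V^L(C^\infty_c(V))\cong C^\infty_c(L)$, and, after fixing a Haar measure, $C^\infty_c(L)\cong\cH(L)$ as a left $\cH(L)$-module. Now $\cH(L)=\varinjlim_K\cH(L)e_K$ over open compact $K\subseteq L$, a filtered colimit of the projective $\cH(L)$-modules $\cH(L)e_K$ (with $e_K$ the idempotent attached to $K$), so $\cH(L)$ is flat, and therefore so is its direct summand $ind_V^L(\psi|_V)$. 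Consequently $\cW=\varinjlim_{V\in\mathcal{V}}ind_V^L(\psi|_V)$ is a filtered colimit of flat $\cH(L)$-modules, hence flat.
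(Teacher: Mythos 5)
Your proof is correct, but it takes a genuinely different route from the paper's. The paper makes $\cW$ a right module via inversion and identifies the functor $M\mapsto \cW\otimes_{\cH(L)}M$ with the twisted coinvariants functor $M\mapsto M_{U,\psi}$ (using $ind_U^L(\psi)\cong\psi\otimes_{\cH(U)}\cH(L)$), then quotes Jacquet's lemma (exactness of $(U,\psi)$-coinvariants when $U$ is exhausted by compact open subgroups) to conclude. You instead present $\cW$ as a filtered colimit of the modules $ind_V^L(\psi|_V)$ over compact open $V<U$, show each term is a direct summand of $\cH(L)$ and hence flat, and pass to the colimit. These are at bottom the same mechanism — the standard proof of Jacquet's lemma is precisely a limit-over-compact-open-subgroups argument — so your version is more self-contained but longer, re-deriving the content of the cited lemma; the paper's is shorter because it outsources that step. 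Two remarks. First, your main obstacle (bijectivity of $\alpha$) can be dissolved entirely: the exhaustion hypothesis gives $\cH(U)=\varinjlim_V\cH(V)$ (every compactly supported smooth measure on $U$ is supported in some compact open $V$), and then
$$ind_U^L(\psi)\cong\psi\otimes_{\cH(U)}\cH(L)\cong\varinjlim_V\bigl(\psi|_V\otimes_{\cH(V)}\cH(L)\bigr)\cong\varinjlim_V ind_V^L(\psi|_V),$$
since tensor products commute with filtered colimits of the base ring; this replaces both the surjectivity and the injectivity bookkeeping. Second, if you do carry out the injectivity step by hand, the subgroup $V'$ you specify (containing the groups $U\cap g_iKg_i^{-1}$) is not quite enough: you need $V'$ to contain the compact sets $U\cap \mathrm{supp}(f)g_i^{-1}$, which involve finitely many additional $V$-translates of those groups; this is still available by exhaustion, and it sits inside the step you explicitly flagged as carrying the real work, so it is a fixable imprecision rather than a gap.
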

\begin{proof}
\EitanF{C}onsider $\cW$  as a right representation via the inversion on $\RamiN{L}$.
	Consider also the functor  $$F:\cM({\cH(\RamiN{L})})\to Vect$$ given by $$F(M)=ind_{U}^{\RamiN{L}}(\psi) \otimes_{{\cH(\RamiN{L})}} M.$$ 
	We have to show that $F$ is exact.
	
		Let $$W:\cM(\RamiN{L})\to Vect$$ be the functor given by $$W(\pi)=\pi_{U,\psi^{-1}}.$$ Since $U$ is exhausted by its open compact subgroups, {the Jacquet's lemma (see e.g. \cite[Lemma 2.35]{BZ}) implies that \EitanF{$W$ is an exact functor.}} 
	
	Let $M \in \mathcal{M}(\mathcal{H}(\RamiN{L}))$, using
	$$ind_{U}^{\RamiN{L}}(\psi)\cong \psi \otimes_{\cH(U)}\cH(\RamiN{L}),$$
	we obtain
	$$F(M) \cong (\psi \otimes_{\cH(U)}\cH(\RamiN{L})) \otimes_{\cH(\RamiN{L})} M \cong M_{U,\psi}=W(M).$$
 	Hence
 	$$ F  \cong W.$$
	The theorem follows.
\end{proof}
This theorem together with {Theorem \ref{thm:cent:spt}\eqref{Ber.cent:2}} gives us the following:

\begin{cor}\label{thm:flat}
		\RamiN{Let $U<G$ be a closed subgroup that is exhausted by its open compact subgroups.	Let $\psi$ be  a character of $U$ and $\cW:=ind_{U}^\RamiN{L}(\psi)$.}
		Let $K<G$ be a splitting subgroup. Then the right  ${\cH(G,K)}$-module $\cW^K$ is flat.
\end{cor}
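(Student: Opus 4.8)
The plan is to deduce Corollary~\ref{thm:flat} from Proposition~\ref{thm0:flat} by passing to the $K$-fixed vectors via the equivalence of categories supplied by Theorem~\ref{thm:cent:spt}(\ref{Ber.cent:1})--(\ref{Ber.cent:2}). First I would apply Proposition~\ref{thm0:flat} with $L=G$ to conclude that $\cW=ind_U^G(\psi)$ is a flat $\cH(G)$-module (as a right module, via inversion). The only real content is then the observation that flatness is inherited by the functor $M\mapsto M^K$ on the Bernstein block $\cM(G,K)$, equivalently by the idempotent truncation $M\mapsto e_K\cH(G)e_K$-module $e_K M$, where $e_K$ is the idempotent measure attached to $K$.

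The key steps, in order, are: (1) Invoke Theorem~\ref{thm:cent:spt}(\ref{Ber.cent:spt4}) to write $\cM(G,K)=\bigoplus_{\omega\in\Omega_K}\omega$ as a direct summand of $\cM(G)$, so that $\cW^K$ is (canonically identified with) $e_K\cW$, the image of $\cW$ under the projection onto this summand. (2) Note that a direct summand of a flat module is flat, hence $e_K\cW$ is a flat $\cH(G)$-module. (3) Observe that for a right $\cH(G)$-module $N$ supported on the block $\cM(G,K)$ one has $N\otimes_{\cH(G)}V\cong (e_K N)\otimes_{\cH(G,K)}V^K$ for every $V\in\cM(G)$, again because taking $K$-fixed vectors is an exact equivalence $\cM(G,K)\xrightarrow{\sim}\cM(\cH(G,K))$ with inverse $\cH(G)e_K\otimes_{\cH(G,K)}(-)$. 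Concretely, tensoring over $\cH(G,K)=e_K\cH(G)e_K$ with an $\cH(G,K)$-module $Q$ agrees with first inducing $Q$ up to an $\cH(G)$-module $\cH(G)e_K\otimes_{\cH(G,K)}Q$ and then tensoring $\cW$ over $\cH(G)$; since both $(-)\mapsto \cH(G)e_K\otimes_{\cH(G,K)}(-)$ (it's $-\otimes$ of a projective, or use the equivalence) and $\cW\otimes_{\cH(G)}(-)$ are exact, the composite $\cW^K\otimes_{\cH(G,K)}(-)$ is exact, i.e. $\cW^K$ is flat over $\cH(G,K)$.

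Alternatively, and perhaps more cleanly, I would argue directly: the functor $Q\mapsto \cW^K\otimes_{\cH(G,K)}Q$ from right $\cH(G,K)$-modules to vector spaces is isomorphic to the functor $Q\mapsto \cW\otimes_{\cH(G)}\big(\cH(G)e_K\otimes_{\cH(G,K)}Q\big)$. The inner functor $Q\mapsto \cH(G)e_K\otimes_{\cH(G,K)}Q$ is exact because $\cH(G)e_K$ is a projective right $\cH(G,K)$-module (indeed, via the block decomposition it realizes the inverse of the equivalence $V\mapsto V^K$, hence is exact), and the outer functor is exact by Proposition~\ref{thm0:flat}. Composing, $\cW^K\otimes_{\cH(G,K)}(-)$ is exact, which is precisely the assertion that $\cW^K$ is a flat $\cH(G,K)$-module.

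The main obstacle is purely bookkeeping: making precise the identification $\cW^K\otimes_{\cH(G,K)}Q\cong \cW\otimes_{\cH(G)}(\cH(G)e_K\otimes_{\cH(G,K)}Q)$ and the exactness of $Q\mapsto\cH(G)e_K\otimes_{\cH(G,K)}Q$. Both follow formally once one has the block decomposition and the Hecke-algebra equivalence of Theorem~\ref{thm:cent:spt}, together with the elementary facts that direct summands of flat modules are flat and that tensoring with a projective bimodule is exact; no genuinely new idea is needed beyond Proposition~\ref{thm0:flat}. One should also record that $\cW^K$ is nonzero only for the finitely many blocks in $\Omega_K$, but that plays no role in flatness.
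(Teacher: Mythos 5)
Your proposal is correct and follows essentially the same route as the paper: the paper deduces the corollary in one line from Proposition~\ref{thm0:flat} together with the equivalence $V\mapsto V^K$ of Theorem~\ref{thm:cent:spt}(\ref{Ber.cent:2}), i.e.\ by composing the exact functor $\cW\otimes_{\cH(G)}(-)$ with the exact inverse equivalence $Q\mapsto\cH(G)e_K\otimes_{\cH(G,K)}Q$, exactly as in your ``cleaner'' second argument. (Your intermediate remark that $e_K\cW$ is flat over $\cH(G)$ is not what is needed and can be dropped, but the composite-functor identification you then give is the correct and intended argument.)
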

}

\subsection{Proof of projectivity \RamiJ{of $ind_{U}^{G}(\psi)$} (Theorem \ref{thm:proj})}
\begin{proof}[Proof of Theorem \ref{thm:proj}]
	By assumption\EitanF{,} \EitanD{the pair $(G,U)$ is of finite type.}
\EitanD{Let $\cW=ind_{U}^{G}(\psi).$ By}
\RamiH{Theorem} \ref{thm:ft}
\RamiI{the module}
$\cW^K$ is finitely generated over $\Rami{\cH(G,K)}$.
	The algebra $\Rami{\cH(G,K)}$ is Noetherian (by \Rami{Theorem \ref{thm:cent:spt}\eqref{Ber.cent:not})} so the module $\cW^K$ is finitely presented. Combining with Theorem \ref{thm:flat}\EitanF{,} we see that $\cW^K$ is a finitely presented flat module. It is well known that a finitely presented flat module is projective (e.g.  
	\cite[Theorem 3.56]{Rot09}).
	It follows that $\cW^K$ is a projective $\Rami{\cH(G,K)}$-module. Thus it is a projective object in $\cM(\EitanF{\cH(G,K)})$. 
	
	
	For a splitting subgroup $K \subset G$
	we let $$I_{K}: \mathcal{M}(\mathcal{H}(G,K))
	\to 
	\EitanF{\mathcal{M}(G,K)} 
	$$
	be the equivalence of categories as in Theorem \ref{thm:cent:spt}. 
	For a pair of splitting subgroups $K<L<G$ and $V\in \cM(G)$ we can decompose $V^K=I_L(V^L)^K\oplus V_{K,L}$.
	By \Rami{Theorem \ref{thm:cent:spt}\eqref{Ber.cent:1}} we can choose a descending sequence $K_n$ of  splitting subgroups that forms a basis for the topology at $1$. We get $$\cW=\bigoplus_n I_{K_{\EitanF{n}}}( \cW_{K_n,K_{n-1}}).$$ 
	Note that $\cW_{K_n,K_{n-1}}$ is a direct summand of $\cW^{\EitanF{K_{n}}}$  and hence projective. We obtain that  $I_{K_n}( \cW_{K_n,K_{n-1}})$ is a projective object in $\mathcal{M}(G).$ Finally, being a direct sum of projective objects,  $\cW$ is also a projective object of $\mathcal{M}(G).$
	
\end{proof}
\section{Density of spherical characters -- proof of Theorem \ref{thm:dens}}\label{sec:dens}
\subsection{\RamiK{Sketch of the proof}}
\RamiK{
We consider $V:=\Sc(G)_{U\times U,\psi\times \psi}$ as a module over $\fz(G)$ and thus as a sheaf over $\Theta(G)$. We have to show the vanishing of any section $v\in V$ 
that satisfy the identities $$\langle \xi, v\rangle=0$$  for any spherical character $\xi$ of any $\pi \in \Pi$.
 
 We do it by three steps:
 \begin{enumerate}
     \item \label{step:1} We show that for any such section $v$, there exist a Zariski dense subset $C\subset \Theta(G)$  such that for any $\chi\in C$ we have $v|_\chi=0$.
     \item \label{step:2} We show that $V$ is (locally) finitely generated Cohen-Macaulay module of full dimension (over each component).
     \item \label{step:3} We show that, given an element $v$ of a Cohen-Macaulay module of full dimension $M$ over a domain $A$, if  there exist a Zariski dense subset $C\subset \spec(A)$ such that for any $x\in C$, then we have $v|_x=0$ then $v=0$.
 \end{enumerate}
 The proof of Step \eqref{step:1} is in \S\S \ref{ssec:good}. For this we identify an open dense subset of  $\Theta(G)$  such that the category $\cM$ have \EitanF{a} very simple structure over points in this set (see Definition \ref{def:good} and Proposition \ref{cor:M.chi.vect}).
 
 Step \eqref{step:2} is based on Theorems \ref{thm:CM} and \ref{thm:proj}. It is proven in Lemma \ref{lem:wit.CM}.
 
The proof of Step \eqref{step:3} is in \S\S \ref{ssec:rel.tor}. }
\subsection{\RamiK{Good characters}}\label{ssec:good}
\RamiK{In this subsection we identify an open dense subset $\cU_G$ of the set $\Theta(G)$ of characters of  $\fz(G)$ such that the category $\cM$ have very simple structure over these characters.}

\begin{defn}\label{def:good}
We say that a cuspidal \EitanD{data $(\rho,M)$} is good if the following holds
    \begin{itemize}
        \item $i_M^G(\rho)$ is irreducible.
        \item  $(W_{M,\rho,G})_\rho=M$. {Here $(W_{M,\rho,G})_\rho$ is the stabilizer of $\rho$ under the action of the  group $W_{M,\rho,G}$ defined in \Rami{Theorem \ref{thm:cent:blk}(\ref{thm:ber:ind})}}.
    \end{itemize}
    \RamiH{
If $(M,\rho)$ is good, we say that $\chi_{(M,\rho)}\in \Theta(G)$ is good, where $\chi_{(M,\rho)}$ is defined in \Rami{Theorem \ref{thm:cent:dec}} \eqref{descriptionofcenter}.

We denote by $\cU_G\subset \Theta(G)$ the set of good characters.}
\end{defn}

\begin{notation}
\Rami{Let $\cC$ be an \RamiK{$\C$-}abelian category and $\fz$ be its center. Given a character $\chi:\fz \to \C$ we define \RamiJ{a full subcategory:} $$\cC|_\chi:=\{c\in \EitanD{Ob(\cC)}| \forall \lambda \in \fz \text{ we have } \lambda|_{c}=\chi(\lambda)Id_{c}\}$$}
\end{notation}

\begin{prop}\label{cor:M.chi.vect}
$ $
\Rami{
\begin{enumerate}
    \item\label{cor:M.chi.vect:1} The set $\cU_G\subset \Theta(G)$ is \EitanF{a} Zariski open dense.
    \item\label{cor:M.chi.vect:2} $\forall \chi \in \cU_G$ we have $ \cM(G)|_{\chi}\cong Vect$. \RamiK{In other words: there exist a unique (up to an isomorphism) irreducible representation $\rho\in \cM(G)|_{\chi}$, and  any representation in $\cM(G)|_{\chi}$ is direct sum of several copies of $\rho$.}
\end{enumerate}
}
\end{prop}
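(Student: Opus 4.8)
The plan is to unwind the Bernstein decomposition and use the explicit description of the block attached to a cuspidal datum. Recall that by Theorem \ref{thm:cent:dec}\eqref{descriptionofcenter} a character $\chi \in \Theta(G)$ corresponds to a conjugacy class of cuspidal data $(M,\rho)$, and the block $\omega = \omega_{(M,\rho)}$ containing $\chi$ is equivalent to the category of modules over $\cR := \End_G(\Psi_G(M,\rho))$, with $\fz(G)$ acting through $Z(\cR)$. Under this equivalence, $\cM(G)|_\chi$ is the category of $\cR$-modules on which $Z(\cR)$ acts by the scalar character determined by $\chi$ — equivalently, $\cR_\chi$-modules where $\cR_\chi := \cR \otimes_{Z(\cR)} \C_\chi$ is the fibre algebra. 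So the content of part \eqref{cor:M.chi.vect:2} is precisely that, for good $\chi$, the finite-dimensional $\C$-algebra $\cR_\chi$ is isomorphic to a matrix algebra $M_n(\C)$ (equivalently, is semisimple with a single simple module).

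For part \eqref{cor:M.chi.vect:2} I would argue as follows. By Theorem \ref{thm:cent:blk}\eqref{thm:ber:azu0}, applied on the Levi $M$, the algebra $\cR_M := \End_M(\Psi_M(M,\rho))$ decomposes as $\bigoplus_{\psi \in \fI_\rho} O(\fX_M)\nu_\psi$, a crossed-product-type algebra over $O(\fX_M)$ with respect to the finite group $\fI_\rho$; in particular over a point $\psi_0 \in \fX_M$ where the $\fX_M$-orbit of $\rho$ (under twisting by $\fI_\rho$) is free — which is the generic situation and is guaranteed by the condition $(W_{M,\rho,G})_\rho = M$ together with irreducibility — the fibre $(\cR_M)_{\psi_0}$ is just $\C$. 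Then by Theorem \ref{thm:cent:blk}\eqref{thm:ber:ind}, $Z(\cR) \cong Z(\cR_M)^{W_{M,\rho,G}}$, and more importantly the irreducibility hypothesis $i_M^G(\rho') $ irreducible for $\rho'$ in the relevant orbit forces $\cR_\chi$ to have a single simple module: an idempotent decomposition of $\cR_\chi$ would split $i_M^G(\rho')$. Combined with the structure of $\cR$ as a finitely generated module over $Z(\cR)$ (so $\cR_\chi$ is finite-dimensional) and the fact that $\Psi_G(M,\rho)$ is a projective generator, one concludes $\cR_\chi \cong M_n(\C)$ and hence $\cM(G)|_\chi \cong \mathrm{Vect}$, with the unique irreducible object $\rho$ being $i_M^G$ of the cuspidal representation attached to $\chi$.

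For part \eqref{cor:M.chi.vect:1}, openness and density of $\cU_G$: I would check each of the two defining conditions is generic on each component of $\Theta(G)$. A component is the image of $\fX_M$ under $\psi \mapsto \chi_{(M,\psi\rho)}$, a finite quotient $\fX_M / W_{M,\rho,G}$. The locus where $i_M^G(\psi\rho)$ is reducible is closed — reducibility points are cut out by the vanishing/poles of the relevant intertwining operators (Harish-Chandra's $\mu$-function), a condition defined by an algebraic subvariety of $\fX_M$ that is proper since generic irreducibility of parabolic induction from a cuspidal is a standard fact (e.g.\ Harish-Chandra, Bernstein–Zelevinsky). The locus where $(W_{M,\rho,G})_{\psi\rho} \neq M$, i.e.\ where some nontrivial element of the finite group $W_{M,\rho,G}/M$ fixes $\psi\rho$, is a finite union of proper closed subtori (translates of the fixed-point subgroups of the finite-group action on $\fX_M$), hence also closed and proper. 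So $\cU_G$ is the complement of a proper closed subset in each component, giving Zariski openness and density. The main obstacle I anticipate is part \eqref{cor:M.chi.vect:2}: pinning down precisely why the two "good" conditions, via Theorems \ref{thm:cent:blk}\eqref{thm:ber:azu0} and \eqref{thm:ber:ind}, force the fibre algebra $\cR_\chi$ to be a full matrix algebra rather than merely a sum of matrix algebras — this is exactly where the irreducibility of $i_M^G(\rho)$ (as opposed to mere semisimplicity) is essential, and the cleanest route is probably to observe that $i_M^G(\rho)$ is a progenerator-image whose endomorphism algebra is $\cR_\chi$, so $\cR_\chi$ is simple iff $i_M^G(\rho)$ is isotypic iff it is irreducible.
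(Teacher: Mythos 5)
Your part (1) is correct and is essentially the paper's one\nobreakdash-line argument: generic irreducibility of $i_M^G(\chi\rho)$ (Lemma \ref{lem:gen.irr}) plus finiteness of $W_{M,\rho,G}/M$ make the complement of $\cU_G$ a proper closed subset of each component. The framework of part (2) --- reduce to showing that the fibre algebra of the block's endomorphism ring at $\chi$ is a full matrix algebra --- also matches the paper, which works with the projective generator $Q:=\Psi_G(M,\rho)\otimes_{\fz(G)}\chi$ of $\cM(G)|_\chi$ and shows $\End_G(Q)\cong Mat_{k\times k}(\C)$ by proving $Q$ is isotypic.

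The gap is in how you get from ``good'' to ``matrix algebra''. Unwinding the \'etale maps of Theorem \ref{thm:cent:blk} (as the paper does), one finds $Q\cong\bigoplus_i i_M^G(Ad(w_i)\rho)$ with the $w_i$ running over $W_{M,\rho,G}/M$. Goodness of $\chi$ makes each summand irreducible, but it does \emph{not} by itself make the summands pairwise isomorphic: a priori your $\cR_\chi=\End_G(Q)$ could be a product of several matrix algebras and $\cM(G)|_\chi\cong Vect^k$ with $k>1$. Your proposed shortcut --- ``an idempotent decomposition of $\cR_\chi$ would split $i_M^G(\rho')$'' --- does not work: a central idempotent of $\cR_\chi$ would merely separate non-isomorphic summands $i_M^G(Ad(w_i)\rho)$ from one another; it splits $Q$ but no individual induced representation. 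Relatedly, your claim that $i_M^G(\rho)$ itself is a progenerator with endomorphism algebra $\cR_\chi$ fails whenever $|W_{M,\rho,G}/M|>1$, since by Schur that endomorphism algebra is just $\C$. The missing ingredient is the paper's Lemma \ref{lem:JH}: $i_M^G(\rho)$ and $i_M^G(Ad(w)\rho)$ have the same Jordan--H\"older constituents (proved via intertwining operators and linear independence of characters); combined with irreducibility this forces all summands of $Q$ to be isomorphic, hence $Q$ isotypic and $\End_G(Q)$ a single matrix algebra. A secondary point: a finite-dimensional algebra with a unique simple module need not be semisimple (e.g.\ $\C[x]/(x^2)$), so even after identifying a unique irreducible you must still rule out nilpotents in $\cR_\chi$; the paper gets this for free because $Q$ is exhibited explicitly as a direct sum of irreducibles.
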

\RamiK{In order to prove this \EitanD{proposition} we need some preparations.}

\RamiI{Recall that any \RamiK{$\C$-}abelian category $\cC$ is a module category over the category of modules over its center $\fz$. In particular, for an object  $M \in Ob(\cC)$ and a character \RamiK{$\chi:\fz\to \C$} one can define $M \otimes_\fz \chi$. \RamiK{Explicitly, it can be described as a} quotient of $M$
by the sum
$$\sum_{\lambda \in \fz}\Ker(\lambda|_M-\chi(\lambda) id_M)$$
}

\begin{lemma}\label{ProjGenSpecialization}
	\Rami{Let $\cC, \fz$ and $\chi$ be as above.} Let $P$ be a projective generator of $\cC$. Then \Rami{$P\otimes_{\z} \chi$} 
	is a projective generator of  \Rami{$\cC|_\chi$}
\end{lemma}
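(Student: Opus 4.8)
The plan is to prove that $P\otimes_{\fz}\chi$ is both a generator and a projective object of $\cC|_\chi$, using the adjunction between the base-change functor $M\mapsto M\otimes_\fz\chi$ and the inclusion $\cC|_\chi\hookrightarrow \cC$. First I would record the key adjunction: for $M\in\cC$ and $N\in\cC|_\chi$ one has a natural isomorphism
\[
\Hom_{\cC|_\chi}(M\otimes_\fz\chi,\,N)\;\cong\;\Hom_\cC(M,N),
\]
since any morphism $M\to N$ must kill $\sum_{\lambda}\Ker(\lambda|_M-\chi(\lambda)\,\mathrm{id}_M)$ once we know that $\fz$ acts on $N$ through $\chi$ (the image of such a generator lands in $\Ker(\lambda|_N-\chi(\lambda)\,\mathrm{id}_N)=0$). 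In other words, $-\otimes_\fz\chi$ is left adjoint to the inclusion $\cC|_\chi\hookrightarrow\cC$, and moreover for $M$ already in $\cC|_\chi$ the counit $M\otimes_\fz\chi\to M$ is an isomorphism.

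Next I would deduce projectivity. Let $0\to A\to B\to C\to 0$ be a (short) exact sequence in $\cC|_\chi$; this is also exact in $\cC$ because $\cC|_\chi$ is a full subcategory closed under kernels and cokernels (the objects are cut out by the $\fz$-action, which is exact). Then
\[
\Hom_{\cC|_\chi}(P\otimes_\fz\chi,-)\;\cong\;\Hom_\cC(P,-)|_{\cC|_\chi},
\]
and the right-hand side is exact on $\cC|_\chi$ because $P$ is projective in $\cC$. Hence $P\otimes_\fz\chi$ is projective in $\cC|_\chi$.

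For the generator property, let $N\in\cC|_\chi$ be nonzero. Since $P$ is a generator of $\cC$ we have $\Hom_\cC(P,N)\neq 0$, and by the adjunction $\Hom_{\cC|_\chi}(P\otimes_\fz\chi,N)\cong\Hom_\cC(P,N)\neq 0$; equivalently, there is an epimorphism from a direct sum of copies of $P$ onto $N$ in $\cC$, and applying the (right-exact) functor $-\otimes_\fz\chi$, which is the identity on $N$, yields an epimorphism from a direct sum of copies of $P\otimes_\fz\chi$ onto $N$ in $\cC|_\chi$. Thus $P\otimes_\fz\chi$ generates $\cC|_\chi$.

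The only genuinely delicate point is the claim that $-\otimes_\fz\chi$ is right exact with values in $\cC|_\chi$ and is left adjoint to the inclusion: concretely one must check that the quotient $M\otimes_\fz\chi=M/\sum_\lambda\Ker(\lambda|_M-\chi(\lambda)\,\mathrm{id}_M)$ actually lies in $\cC|_\chi$ (i.e.\ that $\lambda$ acts on the quotient by $\chi(\lambda)$ for \emph{every} $\lambda$, not just after a further quotient), and that the construction is functorial and commutes with direct sums. This is the main thing to get right; once the adjunction is established, everything else is formal. I would treat this module-category bookkeeping carefully, perhaps citing the general description of an abelian category as a module category over its center, and otherwise keep the argument at the level of the formal manipulations above.
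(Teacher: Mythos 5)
Your proof takes essentially the same route as the paper: the paper's entire argument consists of the adjunction isomorphism $\Hom_{\cC|_\chi}(P\otimes_{\fz}\chi,X)\cong\Hom_{\cC}(P,X)$ followed by ``this implies the assertion,'' and you have simply spelled out why that isomorphism yields both projectivity and generation. The only slip (which mirrors a typo in the paper's own description of $M\otimes_{\fz}\chi$) is in your justification of the adjunction: the quotient should be by $\sum_{\lambda}\operatorname{Im}\bigl(\lambda|_{M}-\chi(\lambda)\,\mathrm{id}_{M}\bigr)$, i.e.\ by $\mathfrak{m}_{\chi}M$ where $\mathfrak{m}_{\chi}=\ker\chi$, not by the sum of kernels --- a morphism into an object of $\cC|_\chi$ automatically kills the images (not the kernels) of the operators $\lambda-\chi(\lambda)$, and for $N\in\cC|_\chi$ one has $\Ker(\lambda|_{N}-\chi(\lambda)\,\mathrm{id}_{N})=N$ rather than $0$.
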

\begin{proof}
\RamiK{Let $X\in \cC|_\chi$. We have $$\Hom_{\cC|_\chi}(\Rami{P\otimes_{\z} \chi},X)\cong \Hom_{\cC}(P,X).$$
This implies the assertion.} 




\end{proof}
\begin{lemma}[cf. Theorem 27 from \cite{Ber_padic}]\label{lem:gen.irr}
	Let \RamiH{$(\rho,M)$} be a cuspidal \EitanD{data} for $G.$ Then for a generic unramified character $\chi$ of $M$ the representation $i_M^G(\chi \rho)$ is irreducible.
\end{lemma}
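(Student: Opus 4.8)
The plan is to prove Lemma~\ref{lem:gen.irr} by reducing the question of irreducibility of $i_M^G(\chi\rho)$ to a finite union of closed conditions on the unramified character $\chi$, each of which is proper. First I would recall from Theorem~\ref{thm:cent:dec}\eqref{Ber.cent:3} that the representations $\{i_M^G(\chi\rho)\}_{\chi\in\fX_M}$ all live in the single Bernstein block $\omega$ attached to the cuspidal data $(M,\rho)$, whose projective generator is $\Psi_G(M,\rho)=\overline i^G_M(\rho\otimes O(\fX_M))$. Working inside this block, one knows that $i_M^G(\chi\rho)$ always has finite length (it is a quotient of a finitely generated projective module over a Noetherian ring, by Theorem~\ref{thm:cent:spt}), and its composition factors are controlled by the finite group $W_{M,\rho,G}$ of Theorem~\ref{thm:cent:blk}\eqref{thm:ber:ind}. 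The key point to extract is that $i_M^G(\chi\rho)$ is irreducible unless $\chi$ lies in a proper Zariski-closed subset of $\fX_M$; this is essentially the content of Bernstein's Theorem~27 in \cite{Ber_padic}, to which the lemma already refers.

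The concrete mechanism I would use: by Frobenius reciprocity (Theorem~\ref{thm:cent:dec}\eqref{Ber.cent:2.5}) and the geometric lemma computing $\bar r^G_M i^G_M$, the endomorphism algebra $\End_G(i_M^G(\chi\rho))$ and the space of intertwining maps $\Hom_G(i_M^G(\chi\rho), i_M^G(w\chi\rho))$ for $w\in W_{M,\rho,G}$ vary algebraically in $\chi$ over the torus $\fX_M$. Reducibility of $i_M^G(\chi\rho)$ forces either $\dim\End_G(i_M^G(\chi\rho))>1$, or a nonzero non-isomorphism intertwining operator between two members of the family; by the theory of the (normalized) standard intertwining operators $A_w(\chi)$, whose poles and kernels are cut out by finitely many rational functions of $\chi$ (products of factors like $1-q^{\langle\alpha,\cdot\rangle}\chi^{\langle\alpha^\vee,\cdot\rangle}$ over the roots $\alpha$ in the relevant $L$-groups), these degeneracies happen only on a finite union of proper subtori (translated by torsion points). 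Since for a generic $\chi$ the stabilizer of $\chi\rho$ in $W_{M,\rho,G}$ is trivial and all the relevant intertwining operators are holomorphic and invertible, Harish-Chandra/Bernstein's irreducibility criterion gives that $i_M^G(\chi\rho)$ is irreducible; the complement of these finitely many proper closed subsets is Zariski open and dense, which is exactly the assertion.

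Alternatively, and perhaps more in the spirit of the rest of the paper, I would phrase the argument representation-theoretically without writing down intertwining operators: consider the module $\Psi_G(M,\rho)$ over its center $Z:=Z(\End_G\Psi_G(M,\rho))\cong O(\fX_M)^{W_{M,\rho,G}}$ (Theorem~\ref{thm:cent:blk}\eqref{thm:ber:ind}); the function $\chi\mapsto \dim\End_G(i_M^G(\chi\rho))$ is upper semicontinuous on $\Theta(G)_\omega\cong \fX_M/W_{M,\rho,G}$ because it is the fiber dimension of a coherent sheaf, and generic irreducibility of $i_M^G(\chi\rho)$ is equivalent to this function being $1$ on a dense open set. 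So it suffices to exhibit \emph{one} $\chi$ with $i_M^G(\chi\rho)$ irreducible, which follows e.g.\ by a unitarity/positivity argument (choosing $\chi$ in the unitary locus where $i_M^G(\chi\rho)$ is tempered and irreducibility is classical) or by a deformation argument near a point where the reducibility locus is visibly proper.

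The main obstacle, and the step requiring genuine input, is establishing that the reducibility locus is a \emph{proper} (not all of $\fX_M$) closed subset — equivalently, that $i_M^G(\chi\rho)$ is irreducible for at least one $\chi$. The finiteness and closedness of the locus are soft (Noetherianity plus semicontinuity), but ruling out that every member of the family is reducible genuinely uses the harmonic-analytic theory of intertwining operators (the fact that normalized intertwining operators compose correctly and are generically invertible, i.e.\ the cocycle relation and Harish-Chandra's commuting-algebra theorem), so I would cite \cite[Theorem~27]{Ber_padic} for this input rather than reprove it.
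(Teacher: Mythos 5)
The paper gives no proof of Lemma \ref{lem:gen.irr} at all: it is quoted directly from Bernstein's notes (Theorem 27 of \cite{Ber_padic}), and since your proposal ultimately defers the essential input to that same citation, you and the paper end up in the same place. Your instinct that the genuine content (properness of the reducibility locus, i.e.\ the harmonic-analytic theory of intertwining operators) should be cited rather than reproved matches what the authors actually do.

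One caution about the surrounding sketch, in case you intend it as more than motivation: the ``alternative'' argument rests on the asserted equivalence between irreducibility of $i_M^G(\chi\rho)$ and $\dim\End_G(i_M^G(\chi\rho))=1$, and this equivalence is false in the direction you need. A finite-length indecomposable module of length two whose simple constituents are non-isomorphic has endomorphism algebra $\C$, so upper semicontinuity of the fiber dimension of $\End$ together with a single irreducible point does not rule out generic reducibility. Similarly, in the first argument, ``reducibility forces $\dim\End>1$ or a non-isomorphism intertwiner'' requires using both adjunctions to manufacture that intertwiner from a proper irreducible submodule and to control which $w(\chi\rho)$ appears in its Jacquet module; for generic $\chi$ one shows the socle and cosocle are each irreducible with cuspidal datum $\chi\rho$ itself and must coincide. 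These are precisely the points Bernstein's Theorem 27 supplies, so the sketch should not be presented as a self-contained proof.
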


\begin{lem}\label{lem:JH}
Let $(M,\rho)$ be  a cuspidal data. Let $w \in W_{M,\rho,G}$ then $i_M^G(\rho)$ and $i_M^G(Ad(w)\rho)$ have the same J\"{o}rdan-Holder components.
\end{lem}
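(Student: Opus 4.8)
The plan is to deduce the statement from two standard facts about parabolic induction, after unwinding the hypothesis on $w$. Since $w$ represents an element of $W_{M,\rho,G}$ I may take $w\in N_G(M)$; then conjugation by $w$ carries the standard parabolic $P=MN$ (the one implicit in the notation $i_M^G$) to a parabolic $Q:=wPw^{-1}=M\cdot(wNw^{-1})$ which again has Levi factor $M$ but need not be standard. Throughout the argument I will therefore allow induction $i_M^{G,Q}$ from an arbitrary parabolic $Q$ with Levi $M$, so that $i_M^G=i_M^{G,P}$.

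\emph{Step 1: the translation isomorphism.} For any smooth representation $\sigma$ of $M$, the assignment $f\mapsto(g\mapsto f(w^{-1}g))$ gives an isomorphism $i_M^{G,P}(\sigma)\xrightarrow{\ \sim\ }i_M^{G,Q}(Ad(w)\sigma)$. I would verify this by a direct computation: it rests only on the identities $\delta_Q(wpw^{-1})=\delta_P(p)$ and $(Ad(w)\sigma)(wpw^{-1})=\sigma(p)$ for $p\in P$, which show that the transformation law defining the target space is exactly the one obtained by transporting that of the source. Taking $\sigma=\rho$ yields $i_M^{G,Q}(Ad(w)\rho)\cong i_M^{G,P}(\rho)=i_M^G(\rho)$.

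\emph{Step 2: independence of the parabolic.} Here I would invoke the classical fact that for a finite-length representation $\tau$ of $M$ and any two parabolic subgroups $Q,Q'$ of $G$ with common Levi $M$, the representations $i_M^{G,Q}(\tau)$ and $i_M^{G,Q'}(\tau)$ have the same semisimplification (equal images in the Grothendieck group of $\cM(G)$). Since $Ad(w)\rho$ is irreducible and cuspidal, $i_M^{G,Q}(Ad(w)\rho)$ has finite length, hence $i_M^{G,Q}(Ad(w)\rho)$ and $i_M^{G,P}(Ad(w)\rho)=i_M^G(Ad(w)\rho)$ have the same Jordan--H\"older constituents. Combining with Step 1, $i_M^G(Ad(w)\rho)$ and $i_M^G(\rho)$ have the same Jordan--H\"older constituents (in fact they coincide in the Grothendieck group), which is the assertion.

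I expect Step 2 to be the only point of substance. If one prefers a self-contained argument, I would reduce it along a chain of adjacent parabolic subgroups to the relative rank-one situation (so that $M$ is a maximal Levi in the relevant intermediate group and the two parabolics are opposite), and there appeal to the meromorphic family of standard intertwining operators, which is an isomorphism for a generic unramified twist of the inducing datum --- this is precisely where Lemma~\ref{lem:gen.irr} would enter. Finally I would remark that the hypothesis $\rho|_{M^0}\circ Ad(w)\cong\rho|_{M^0}$ plays no role in the proof itself; it only guarantees that $i_M^G(\rho)$ and $i_M^G(Ad(w)\rho)$ lie in the same Bernstein block, which is what gives the statement content in its intended use (e.g.\ in Proposition~\ref{cor:M.chi.vect}).
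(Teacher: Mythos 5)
Your argument is correct, and it is organized differently from the paper's. You first transport $i_M^G(Ad(w)\rho)$ back to $i_M^{G,\,wPw^{-1}}(\rho)$ via the conjugation isomorphism and then quote the classical theorem that normalized parabolic induction of a finite-length representation from two parabolic subgroups with the same Levi has the same image in the Grothendieck group; the paper instead proves the statement directly by a deformation argument: it takes the standard intertwining operator $I(\chi)\colon i_M^G(\chi\rho)\to i_M^G(Ad(w)\chi\rho)$, uses Lemma \ref{lem:gen.irr} to see it is an isomorphism for generic unramified $\chi$, and then compares $tr(f,i_M^G(\chi\rho))$ with $tr(f,i_M^G(Ad(w)\chi\rho))$ as algebraic functions of $\chi$ (citing \cite{Muic2008}) to conclude equality at $\chi=1$ by linear independence of characters. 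The two routes are close in substance --- the standard proofs of the independence-of-parabolic theorem you invoke (van Dijk-type character formulas, or Bernstein--Deligne--Kazhdan, or exactly the intertwining-operator deformation you sketch as a fallback) use the same ingredients as the paper --- but yours is shorter at the cost of a black box, while the paper's is self-contained modulo standard facts about intertwining operators and algebraicity of traces in the unramified parameter. Your closing remark is also accurate and consistent with the paper: the condition $w\in W_{M,\rho,G}$ is not used in either proof; it only matters for how the lemma is applied in Proposition \ref{cor:M.chi.vect}, where one needs the two induced representations to lie in the same block and share the infinitesimal character $\chi$.
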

For completeness, we include the proof of this Lemma in \S\S \ref{ssec:pf.JH}.

\begin{proof}[Proof of \EitanD{Proposition} \ref{cor:M.chi.vect}]

    $ $
    \begin{enumerate}
    
    \item

    By lemma \ref{lem:gen.irr} and the fact that $(W_{M,\rho,G})_{\rho}/M$ is finite we get that $\cU_G\subset \Theta(G)$ is an open dense set.
    
    \item
    Let $\chi\in \cU_G$ and $(M,\rho)$ be \RamiI{a} corresponding cuspidal data. \Rami{To show that $\cM(G)|_{\chi}\cong Vect$, we will exhibit a projective generator for $\cM(G)|_{\chi}$ whose Endomorphism algebra is isomorphic to a matrix algebra.
    }

        By \Rami{Theorem \ref{thm:cent:dec} \eqref{Ber.cent:3}}\EitanF{,} the representation $\Psi_{G}(M,\rho)=i_M^G(ind_{\RamiM{M^0}}^M(\rho|_M))$ is a projective generator of $\cM_{[M,\rho]}$. Thus by Lemma \ref{ProjGenSpecialization} the representation $$\Rami{Q:=}(i_M^G(ind_{\RamiM{M^0}}^M(\rho|_{\RamiM{M^0}})))\otimes_{\fz(G)} \chi$$
	is a projective generator of the category $\cM(G)|_{\chi}$. 
	\Rami{In order to show that $\End_{G}(Q)$ is \RamiK{isomorphic to a} matrix algebra 
	we show that $Q$
\RamiK{is isotypic  (semi-simple) representation}}.

	Theorem \Rami{\ref{thm:cent:blk} \eqref{thm:ber:ind}} gives us the morphism $\nu:\fz(G) \to \fz(M).$ 
	Thus, via $\nu$, every representation $\tau \in \cM(M)$ have an action of $\fz(G)$. 
	We have 
	
	\begin{align*}
        \Rami{Q} &\cong i_M^G(ind_{\RamiM{M^0}}^M(\rho|_{\RamiM{M^0}})\otimes_{\fz(G)} \chi)\\
        &\cong i_M^G(ind_{\RamiM{M^0}}^M(\rho|_{\RamiM{M^0}})\otimes_{\C[M/\RamiM{M^0}]}  (\C[M/\RamiM{M^0}] \otimes_{\fz(G)} \chi))
    \end{align*}

	{Our next step is to show that $\C[M/\RamiM{M^0}] \otimes_{\fz(G)} \chi$ is  a direct sum of characters.}
	
	\RamiI{Theorem \ref{thm:cent:blk} \eqref{thm:ber:azu0} gives us morphisms\RamiN{:} 
	\RamiM{$$\fz(M)\to Z(\cR_{(G,\rho)}) \cong O(\fX_G)^{\fI_\rho}\to O(\fX_G).$$
	Denote this composition by $\mu$. 
	
	{Since $\Theta(G)$ is a countable disconnected union of algebraic varieties, we can use the classical language of algebraic geometry when operating with it locally.}
	Since the action of $\fI_\rho$ on $\fX_G$ is free, we obtain that%
	}
	the corresponding map}  $$\mu_{*}: \Spec(\C[M/\RamiM{M^0}]) \to \Theta(M)$$ is \'{e}tale.
	
	Notice \RamiI{also} that $\nu$ induces a map $\nu_{*}: \Theta(M) \to  \Theta(G)$ that is \'{e}tale over $\chi$\RamiI{,} by \Rami{Theorem \ref{thm:cent:blk} \eqref{thm:ber:ind}} and the fact that $\chi \in \U_{G}$ is good. 
	
	\RamiH{Hence} the map $\nu_{*} \circ \mu_{*}: \Spec \C[M/\RamiM{M^0}] \to \Theta(G)$ is \'{e}tale at $\chi$. Let $\eta$ be the character of $\fz(M)$ acting on $\rho.$ We have that $\nu_{*}(\eta)=\chi.$ By \Rami{Theorem \ref{thm:cent:blk} \eqref{thm:ber:ind}}, $\nu_{*}^{-1}(\chi)=W_{M,\rho,G} \cdot \eta.$ 
	
	Therefore
	$$\C[M/\RamiM{M^0}] \otimes_{\fz(G)} \chi\cong \bigoplus_i \chi_i$$
	\Rami{with $\chi_{i} \in \fX_M$ and where
	$$\chi_i \rho  \simeq Ad(w_i) \rho,$$ 
	for some $w_i \in W_{M,\rho,G}$\RamiI{.} 
	}
	We get 
	$$\Rami{Q}\cong \bigoplus_i i_M^G(Ad(w_i) \rho).$$ By Lemma \ref{lem:JH}  all $i_M^G(Ad(w_i) \rho)$ are the same in the Grothendieck group \EitanD{of $\cM(G)$}. Since $\chi$ is good, \EitanD{these induced representations} are all irreducible and hence isomorphic. This implies that $\Rami{Q}$ is isotypic \RamiK{(semi-simple)}  \EitanE{representation} and hence  $$\cM(G)|_{\chi} \cong \cM(End_G(\Rami{Q})) \cong \cM(Mat_{k\times k}(\C))\cong Vect.$$
    \end{enumerate}
	
\end{proof}

\begin{cor}\label{cor:sph.are.all}
	Let \EitanD{$V=\Sc(G)_{U\times U,\psi\times \psi}$.}
	Let $\cU_G\subset \Theta(G)$ be as above.
	\EitanD{For any $\chi\in \cU_G$, we let $\pi_{\chi} \in \inf^{-1}(\chi) \subset \Irr(G)$, be an irreducible representation with infinitesimal character $\chi.$} 
    \EitanD{Then} the space \EitanD{$(V^*)^{\fz(G),\chi^{-1}}$} is generated by the spherical characters of $\pi_{\chi}$.
\end{cor}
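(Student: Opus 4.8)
The plan is to reduce the statement to the concrete description of $\cM(G)|_\chi$ obtained in Proposition \ref{cor:M.chi.vect}, and then to identify $(V^*)^{\fz(G),\chi^{-1}}$ with a space of pairings coming from that single irreducible $\pi_\chi$. First I would make precise what the subspace $(V^*)^{\fz(G),\chi^{-1}}$ is: a functional $\lambda$ on $V = \Sc(G)_{U\times U,\psi\times\psi}$ lies in it iff $\fz(G)$ acts on $\lambda$ through $\chi^{-1}$, equivalently (since $V$ is a $\fz(G)$-module and the pairing is $\fz(G)$-balanced) $\lambda$ factors through $V\otimes_{\fz(G)}\chi = V_\chi$, the fiber of the $\fz(G)$-sheaf $V$ at $\chi$. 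So the claim becomes: $V_\chi^*$ is spanned by (the images of) the spherical characters $\chi^{\pi_\chi}_{l_1,l_2}$ with $l_1\in(\pi_\chi^*)^{U,\psi}$, $l_2\in(\tilde\pi_\chi^*)^{U,\psi}$.

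**Next I would compute $V_\chi$.** Here $V=\Sc(G)_{U\times U,\psi\times\psi}$, the $(U\times U,\psi\times\psi)$-coinvariants of $\Sc(G)$ under left-times-right translation; equivalently, using that $\Sc(G)$ as a $G\times G$-module is a completion of $\cW\otimes\wt\cW$ for $\cW=ind_U^G(\psi)$ up to the usual Whittaker subtleties, one has $V \cong \cW\otimes_{\cH(G)}\wt\cW$ as a $\fz(G)$-module — or more safely, $V$ is the space of $(U\times U,\psi\times\psi)$-equivariant Schwartz distributions, which by Frobenius reciprocity / the flatness computation of Proposition \ref{thm0:flat} is naturally a module over $\fz(G)$ with fiber $V_\chi \cong (\cW_\chi)^*{}^{U,\psi}\otimes\cdots$. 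The cleaner route: since $\cW$ is projective (Theorem \ref{thm:proj}) and finitely generated over each block (Corollary \ref{cor:fg}), and $\cM(G)|_\chi\cong Vect$ with $\cW\otimes_{\fz(G)}\chi$ a sum of $k$ copies of $\pi_\chi$ (by Proposition \ref{cor:M.chi.vect}\eqref{cor:M.chi.vect:2} and Lemma \ref{ProjGenSpecialization}), the $\psi$-Whittaker functional space $\Hom_U(\cW_\chi,\psi)$ has dimension $k\cdot\dim(\pi_\chi^*)^{U,\psi}$, which is finite by Theorem \ref{thm:BH}. Then $V_\chi \cong \Hom_U(\cW_\chi,\psi)\otimes\Hom_U(\wt\cW_\chi,\psi)$ (a tensor of two finite-dimensional spaces attached to $\pi_\chi$), and dualizing, $V_\chi^*$ is exactly the space of bilinear pairings $l_1\otimes l_2 \mapsto \langle\chi^{\pi_\chi}_{l_1,l_2},-\rangle$.

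**The main obstacle** I expect is the bookkeeping around $\Sc(G)$ versus $\cH(G)$ and the precise identification of the $\fz(G)$-module $V$ and its fiber $V_\chi$ with a Whittaker-functional space attached to $\cW_\chi$ — i.e. justifying that taking $(U\times U,\psi\times\psi)$-equivariant distributions commutes appropriately with specialization at $\chi$, using exactness of the Jacquet functor $W$ from Proposition \ref{thm0:flat} and projectivity of $\cW$. Once that identification is in place, the rest is formal: in $\cM(G)|_\chi\cong\cM(\Mat_{k\times k}(\C))$ every object is $\pi_\chi^{\oplus k}$, $\cW_\chi \cong \pi_\chi^{\oplus m}$ for $m=\dim(\pi_\chi^*)^{U,\psi}$, so $\Hom_U(\cW_\chi,\psi)\cong \C^m$ realized via the $l_1$'s, likewise on the dual side via the $l_2$'s, and the pairing matrix of $V_\chi$ is literally indexed by $(l_1,l_2)$.

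**Thus I would conclude** by writing: $V_\chi^* = \spa\{\,\langle\chi^{\pi_\chi}_{l_1,l_2},-\rangle \mid l_1\in(\pi_\chi^*)^{U,\psi},\ l_2\in(\tilde\pi_\chi^*)^{U,\psi}\,\}$, which under the identification $(V^*)^{\fz(G),\chi^{-1}}\cong V_\chi^*$ is precisely the asserted statement. One subtlety to double-check is that the spherical character $\chi^{\pi_\chi}_{l_1,l_2}$, a priori a generalized function in $C^{-\infty}(G)^{U\times U,\psi\times\psi}$, restricts to a functional on $\Sc(G)_{U\times U,\psi\times\psi}$ factoring through the fiber at $\chi$ — but this is immediate since $\fz(G)$ acts on $\pi_\chi$ (hence on $\chi^{\pi_\chi}_{l_1,l_2}$ through the pairing) by $\chi$, so the functional kills $\sum_{\lambda}\mathrm{Ker}(\lambda|_V-\chi(\lambda))$ and descends to $V_\chi$.
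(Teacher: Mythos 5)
Your route is genuinely different from the paper's. The paper's proof is essentially a citation: it invokes \cite[Proof of Proposition D]{AGS_Z}, which says that for \emph{every} $\chi\in\Theta(G)$ the eigenspace $(V^*)^{\fz(G),\chi}$ is spanned by spherical characters of \emph{admissible} representations on which $\fz(G)$ acts by $\chi^{-1}$, and then only adds Proposition \ref{cor:M.chi.vect}(\ref{cor:M.chi.vect:2}) to conclude that for good $\chi$ these admissible representations are all $\pi_\chi$-isotypic, so their spherical characters are spanned by those of $\pi_\chi$. You instead bypass the external result entirely and try to compute the fiber $V_\chi=V\otimes_{\fz(G)}\chi$ directly from $V\cong\cW^R\otimes_{\cH(G)}\cW$, projectivity (Theorem \ref{thm:proj}), finite generation (Corollary \ref{cor:fg}), and the structure of $\cM(G)|_\chi$. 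That is a legitimate and more self-contained strategy, and the first reduction (that $(V^*)^{\fz(G),\chi^{-1}}$ is the dual of the fiber $V_\chi$, modulo the inversion convention) is fine.

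However, as written there are two genuine gaps. First, the formula $V_\chi\cong\Hom_U(\cW_\chi,\psi)\otimes\Hom_U(\wt\cW_\chi,\psi)$ is dimensionally wrong: writing $\cW_\chi\cong\pi_\chi^{\oplus m}$, each factor already has dimension $m\cdot\dim(\pi_\chi^*)^{U,\psi}$ (respectively $m'\cdot\dim(\tilde\pi_\chi^*)^{U,\psi}$), so the tensor product over-counts by roughly the square of the multiplicity. The correct computation is $V_\chi\cong\cW^R\otimes_{\cH(G)}(\pi_\chi^{\oplus m})\cong\bigl((\pi_\chi)_{U,\psi^{-1}}\bigr)^{\oplus m}$ (using the identification $F\cong W$ from Proposition \ref{thm0:flat}), and one must then separately identify the multiplicity space $\Hom_G(\cW,\pi_\chi)\cong\C^m$ with $(\tilde\pi_\chi^*)^{U,\psi}$ via duality for compact induction; only then does $\dim V_\chi^*=\dim(\pi_\chi^*)^{U,\psi}\cdot\dim(\tilde\pi_\chi^*)^{U,\psi}$ come out right. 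Second, and more importantly, the step you defer as ``bookkeeping'' --- that under all these identifications the functional attached to $l_1\otimes l_2$ on $V_\chi$ really is the image of the generalized function $\chi^{\pi_\chi}_{l_1,l_2}$, i.e.\ $f\mapsto\langle f\cdot l_1,l_2\rangle$ --- is precisely the non-formal content of the statement; a dimension match alone does not show that the spherical characters \emph{span} $V_\chi^*$ rather than some other subspace. This is exactly what the citation to \cite{AGS_Z} supplies in the paper. So the proposal is a plausible and independent outline, but it is not yet a complete proof; you would either need to carry out that identification explicitly or fall back on the result of \cite{AGS_Z} as the paper does. (A minor point: finiteness of the relevant multiplicities should be drawn from the finite-type hypothesis of Definition \ref{def:ft}, which is the generality in which the corollary is used, rather than from Theorem \ref{thm:BH}.)
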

\begin{proof}
	By \cite[Proof of Proposition D]{AGS_Z}, for any $\chi\in \Theta(G)$, the space  $(V^*)^{\fz(G),\chi}$ is the space spanned by spherical characters of admissible representations of $G$ on which $\fz(G)$ acts by $\chi^{-1}$. The corollary follows now from the previous \EitanD{proposition} \RamiK{(Proposition \ref{cor:M.chi.vect}\eqref{cor:M.chi.vect:2})}.
\end{proof}

\subsection{\RamiK{Support of elements of Cohen-Macaulay modules}}\label{ssec:rel.tor}
\RamiK{In this subsection we show a strong restriction on the support of \EitanF{sections} of Cohen-Macaulay modules \EitanF{of full dimension}, see Corollary  \ref{cor:CMisRelTorFree}.}

\begin{definition}
Let $A$ be a commutative  unital finitely generated $\C$-algebra.
We say that an $A$-module $M$ is relatively torsion free if
for any non-zero element $m \in M$, \RamiK{and for any $x\in \Supp(m)$ we have  $\dim_x(\Supp(m))=\dim_x(M)$.}
\end{definition}

\begin{lemma}\label{lem: CMisRelTorFree}
Let $A$ be a commutative  unital finitely generated $\C$-algebra. 
\EitanE{An} \RamiK{equidimensional} Cohen-Macaulay module $M$ over $A$ is relatively torsion free.  
\end{lemma}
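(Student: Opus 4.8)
\textbf{Proof plan for Lemma \ref{lem: CMisRelTorFree}.}

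The plan is to reduce the global statement to a statement about local rings, where the standard commutative-algebra fact that associated primes of a Cohen-Macaulay module are all minimal (equivalently, a Cohen-Macaulay module has no embedded primes) does the work. First I would fix a nonzero $m \in M$ and a point $x \in \Supp(m)$, and pass to the local ring $A_x$ with module $M_x$; note $m$ maps to a nonzero element of $M_x$ since $x \in \Supp(m)$. By the equidimensional Cohen-Macaulay hypothesis, $M_x$ is Cohen-Macaulay over $A_x$ of some fixed dimension $d = \dim_x(M)$, independent of $x$ on each component containing $x$; more precisely $\dim_{A_x} M_x = d$ for every $x$ in the support (this is exactly what "equidimensional" buys us, and $\dim_x(M) = \dim_{A_x}(M_x)$ by definition of the local dimension).

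The key step is then: $\Supp_{A_x}(m) = V(\operatorname{Ann}_{A_x}(m)) = \Supp_{A_x}(A_x\cdot m)$, and $A_x \cdot m$ is a nonzero submodule of $M_x$. Every associated prime of the nonzero module $A_x \cdot m$ is an associated prime of $M_x$ (a submodule's associated primes are among the ambient module's), and since $M_x$ is Cohen-Macaulay, all its associated primes are minimal in $\operatorname{Supp}(M_x)$ and have the same dimension, namely $\dim_{A_x} M_x = d$ (this is the unmixedness/no-embedded-primes property of Cohen-Macaulay modules over local rings). Hence every minimal prime of $\operatorname{Ann}_{A_x}(m)$ — i.e. every component of $\Supp_{A_x}(m)$ through a point, in particular through the closed point — has dimension $d$, so $\dim(\operatorname{Supp}_{A_x}(m)) = d = \dim_{A_x}(M_x)$. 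Translating back to $\operatorname{Spec}(A)$ via the identifications $\dim_x(\Supp(m)) = \dim(\Supp_{A_x}(m))$ and $\dim_x(M) = \dim_{A_x}(M_x)$ gives $\dim_x(\Supp(m)) = \dim_x(M)$, which is precisely relative torsion-freeness.

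The main subtlety — though not really an obstacle — is bookkeeping the relation between the local dimension $\dim_x$ of a subset of $\operatorname{Spec}(A)$ and the Krull dimension of the localized module, and making sure that "equidimensional Cohen-Macaulay" is used correctly: we need that $\dim_{A_x} M_x$ takes the constant value $d$ at \emph{every} point $x$, which is the hypothesis, and we need the local statement that a Cohen-Macaulay module has no embedded associated primes and is equidimensional as a module over the local ring (so all associated primes have coheight zero in its support). Both are standard (e.g. Matsumura, or Bruns--Herzog), so once the reduction to $A_x$ is set up the proof is essentially immediate; I would keep it to a few lines rather than reproving the commutative-algebra facts.
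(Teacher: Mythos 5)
Your argument is correct, but it follows a genuinely different route from the paper. The paper's proof invokes \cite[Criterion 2.5]{BBG97} to produce a polynomial subalgebra $B \subset A$ over which $M$ is finitely generated and (locally) free, and then pushes everything down to $\Spec(B)$: since $B$ is a domain and $M$ is locally free over it, a nonzero $m$ has trivial annihilator in $B$, so under the finite map $\Supp(M) \to \Spec(B)$ one gets $\dim(\Supp(m))=\dim(\Supp_B(m))=\dim(\Supp_B(M))=\dim(M)$. You instead localize at $x \in \Supp(m)$ and use the unmixedness of Cohen--Macaulay modules over local rings: $\operatorname{Ass}(A_x m) \subset \operatorname{Ass}(M_x)$, every $q \in \operatorname{Ass}(M_x)$ satisfies $\dim(A_x/q)=\dim M_x$, and the minimal primes of $\operatorname{Ann}(m)$ lie in $\operatorname{Ass}(A_x m)$, so every component of $\Supp_{A_x}(m)$ has full dimension. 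Your approach is more self-contained (no appeal to the Noether-normalization-type criterion of \cite{BBG97}, which the paper is in any case citing for other purposes) and purely local; the paper's reduction to a free module over a polynomial ring makes the conclusion completely transparent once the criterion is granted. One bookkeeping caveat in your write-up: the identifications $\dim_x(\Supp(m))=\dim(\Supp_{A_x}(m))$ and $\dim_x(M)=\dim_{A_x}(M_x)$ hold as stated only when $x$ is a closed point; at a general point both sides are shifted by $\dim \overline{\{x\}}$, so the desired equality $\dim_x(\Supp(m))=\dim_x(M)$ still follows (equivalently, argue at closed points and observe that each irreducible component of $\Supp(m)$ is $V(q)$ for some $q \in \operatorname{Ass}(M)$), but the identities should not be asserted as literal equalities.
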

\begin{proof}
\EitanE{By \cite[\RamiN{Criterion} 2.5]{BBG97} there exists a polynomial algebra} $B \subset A$ such that $M$ is finitely generated \RamiN{(locally)} free $B$-module.
Let $\pi: \Spec(A) \to \Spec(B)$ be the projection. Then $\pi|_{Supp(M)}$ is
a finite map. The support of $m$ in $\Spec(B)$ is the projection $\pi(\Supp(m))$ and the support of $M$ in $\Spec(B)$ is the projection $\pi(\Supp(M))$.
Now $$\dim(\Supp(m))=\dim(\Supp_B(m))=\dim(\Supp_B(M))=\dim(M).$$
\end{proof}
\begin{cor}\label{cor:CMisRelTorFree}
\Rami{Let $A$ be a commutative unital finitely generated $\C$-algebra without zero divisors and let $M$ be a Cohen-Macaulay module of full dimension over $A$}. Let  $\RamiK{m} \in M$ {and} assume that there exist \RamiK{a dense subset} \Rami{$S \subset \Spec(A)$}
such that for each $x \in S$ we have $\RamiK{m}|_x=0$. Then $\RamiK{m}=0$.
\end{cor}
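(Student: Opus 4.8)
The plan is to deduce this from Lemma \ref{lem: CMisRelTorFree}, which says that an equidimensional Cohen-Macaulay module is relatively torsion free. First I observe that the hypothesis that $A$ has no zero divisors means $\Spec(A)$ is irreducible, so it is in particular connected, and $\dim_p(\Spec(A))=\dim(A)$ for every $p$; hence a Cohen-Macaulay module of full dimension over $A$ is automatically equidimensional Cohen-Macaulay (with $d=\dim(A)$). So Lemma \ref{lem: CMisRelTorFree} applies: $M$ is relatively torsion free.

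Now argue by contradiction. Suppose $m\neq 0$. By relative torsion freeness, for every $x\in\Supp(m)$ we have $\dim_x(\Supp(m))=\dim_x(M)$. Since $M$ has full dimension, $\dim_x(M)=\dim_x(\Spec(A))=\dim(A)$ (using irreducibility of $\Spec(A)$ again), so $\dim_x(\Supp(m))=\dim(A)$ at every point of $\Supp(m)$. As $\Supp(m)$ is a closed subset of the irreducible space $\Spec(A)$ having a point of full dimension $\dim(A)$, it must be all of $\Spec(A)$: indeed a proper closed subset of an irreducible variety has dimension strictly less than that of the ambient space at each of its points. Hence $\Supp(m)=\Spec(A)$.

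On the other hand, the hypothesis gives a dense subset $S\subset\Spec(A)$ with $m|_x=0$ for every $x\in S$, i.e. $S\cap\Supp(m)=\emptyset$ (a point lies in $\Supp(m)$ precisely when the localization $m|_x\neq 0$). But $\Supp(m)=\Spec(A)$ is closed and contains the dense set... wait, rather: $S\subset\Spec(A)\setminus\Supp(m)$, which is then a dense subset of $\Spec(A)$ contained in the open complement of $\Supp(m)$; since $\Supp(m)$ is closed with nonempty interior (it equals the whole space), its complement is empty, contradicting the density, hence nonemptiness, of $S$. Wait — I should be careful: $S$ is assumed dense, hence nonempty, and $S\cap\Supp(m)=\emptyset$ forces $\Supp(m)\neq\Spec(A)$, contradicting the previous paragraph. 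Therefore $m=0$.

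The only point requiring genuine care — and the main obstacle — is the bookkeeping around the definition of support of an element and the local dimension function: one must check that "$m|_x=0$ for $x$ in a dense set" is exactly the statement "$\Supp(m)$ is a proper closed subset", and that "full dimension" over a domain forces any nonzero element's support to be the whole (irreducible) spectrum. Both are elementary once the right definitions are unwound, but they are the substance of the argument; everything else is the reduction to Lemma \ref{lem: CMisRelTorFree}.
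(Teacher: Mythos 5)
There is a genuine gap, and it sits exactly at the point you yourself flagged as ``the main obstacle'': the meaning of $m|_x$. You resolve it by reading $m|_x$ as the image of $m$ in the localization $M_x$, so that ``$m|_x=0$ for all $x\in S$'' becomes ``$S\cap\Supp(m)=\emptyset$''. But in this paper $m|_x$ denotes the image of $m$ in the \emph{fiber} $M\otimes_A k(x)$ over the residue field: this is what the corollary receives in the proof of Theorem \ref{thm:dens}, where Corollary \ref{cor:sph.are.all} only tells you that $f$ dies in the coinvariants $V\otimes_{\fz(G)}\chi$, not in the localization at $\chi$. Under that reading the equivalence ``$m|_x=0\iff x\notin\Supp(m)$'' is false (take $A=\C[t]$, $M=A$, $m=t$, $x$ the origin: the fiber value vanishes but $x\in\Supp(m)$), so the deduction $S\cap\Supp(m)=\emptyset$ does not follow and the contradiction collapses. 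With your reading the statement you prove is correct but strictly weaker, and in particular insufficient for the application.

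The paper's proof supplies precisely the missing bridge: restrict to an open dense $U\subset\Spec(A)$ over which $M$ is locally free. There the fiberwise zero locus of $m$ is closed in $U$; since $S\cap U$ is still dense in $U$, the section $m$ vanishes in every fiber over $U$, and since $\Spec(A)$ is reduced this forces $m|_U=0$ as a section, i.e. $\Supp(m)\subset\Spec(A)\smallsetminus U$, a proper closed subset of an irreducible space. Only then does one invoke Lemma \ref{lem: CMisRelTorFree}. The second half of your argument --- reducing ``full dimension over a domain'' to ``equidimensional'' so the Lemma applies, and observing that relative torsion freeness forces a nonzero $m$ to have $\Supp(m)=\Spec(A)$ --- is correct and matches the paper's intent; what is missing is the generic local freeness step that converts fiberwise vanishing on a dense set into properness of the support.
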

\begin{proof}
Consider $M$ as a sheaf over  \Rami{$\Spec {(A)}$.} There exist an open dense subset  \RamiK{$U \subset \Spec(A)$}
such that $M|_{\RamiK{U}}$ is locally free. 
The assumption implies that $\RamiK{m}|_{\RamiK{U}}=0$. 
By Lemma \ref{lem: CMisRelTorFree} we get $\RamiK{m}=0$.
\end{proof}

\subsection{\RamiK{Cohen-Macaulay property of  $\Sc(G)_{U\times U,\psi\times \psi}$ and the proof of Theorem \ref{thm:dens}}}

\begin{lemma}\label{lem:wit.CM}	
\RamiK{For any Bernstein block $\omega\in \Omega(G)$, t}he module 
    $\RamiK{\left(\Sc(G)_{U\times U,\psi\times \psi}\right)_\omega}$
	 is {a \RamiK{finitely generated} Cohen-Macaulay module of full dimension} over \RamiK{$(\fz(G))_\omega.$}
\end{lemma}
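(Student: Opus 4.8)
The plan is to reduce the statement, via a chain of adjunctions and functoriality of the Bernstein decomposition, to the Cohen-Macaulay property of $\End(P)$ for a suitable projective object $P$, which is exactly Theorem~\ref{thm:CM}. First I would fix a Bernstein block $\omega$ and a splitting subgroup $K$ with $\omega\in\Omega_K$. Writing $\cW=ind_U^G(\psi)$, Theorem~\ref{thm:proj} tells us that $\cW$ is projective in $\cM(G)$, and Corollary~\ref{cor:fg} tells us that $\cW_\omega$ is finitely generated; hence $\cW_\omega$ is a finitely generated projective object of the block $\omega$. The key identification is the standard one for the twisted coinvariants of a smooth representation: for $V\in\cM(G)$ one has a natural isomorphism
$$
V_{U,\psi}\;\cong\;\Hom_G\!\left(ind_U^G(\psi^{-1}),\,\wt V\right)^\vee
$$
(or, dually, $\Sc(G)_{U\times U,\psi\times\psi}$ is the space of $(U\times U,\psi\times\psi)$-coinvariants of $\cH(G)$ acting on itself), which lets us write the space $\Sc(G)_{U\times U,\psi\times\psi}$ as $\cW\otimes_{\cH(G)}\cW$ suitably interpreted, i.e. as $\Hom$-space between two copies of $\cW$. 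Concretely I would use that $\Sc(G)_{U\times U,\psi\times\psi}\cong (\cW\otimes\cW)_{\Delta G}$, which after the projectivity of $\cW$ becomes $\Hom_G(\wt\cW,\cW)$ (up to passing to contragredients and using that $U$ is unimodular-enough so that $\wt\cW\cong ind_U^G(\psi^{-1})$ is again of the same type).

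Next I would pass to the block $\omega$ and to the level of Hecke modules: applying the equivalence $V\mapsto V^K$ of Theorem~\ref{thm:cent:spt}\eqref{Ber.cent:2}, the space $\left(\Sc(G)_{U\times U,\psi\times\psi}\right)_\omega$ is identified with $\Hom_{\cH(G,K)}(\cW^K,\cW^K)$-type object, i.e. with $\End$ or $\Hom$ between finitely generated projective $\cH(G,K)$-modules lying in the summand corresponding to $\omega$. Since any finitely generated projective module over $\cH(G,K)$ is a direct summand of a free one, and since $\cW^K_\omega$ is such, we can write $\cW^K_\omega$ as a direct summand of $P_\omega^{\oplus n}$ where $P_\omega=\Psi_G(M,\rho)^K$ is the standard projective generator of the block. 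Then $\Hom$ between direct summands of $P_\omega^{\oplus n}$ is a direct summand of a matrix algebra over $\End(P_\omega)$, hence a direct summand (as a module over the center) of a free $\End(P_\omega)$-module. By Theorem~\ref{thm:CM}, $\End(P_\omega)$ is a Cohen-Macaulay module of full dimension over its center $Z=(\fz(G))_\omega$, and a direct summand of a finite free module over such a ring is again Cohen-Macaulay of full dimension over $Z$ (using \cite[Theorem 2.1]{BBG97} exactly as in the proof of Theorem~\ref{thm:CM2}); finite generation over $(\fz(G))_\omega$ is automatic from finite generation over $\cH(G,K)$ together with the fact that $\cH(G,K)$ is finite over its center on each block.

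\textbf{Main obstacle.}
The delicate point is the first identification: making precise the isomorphism $\Sc(G)_{U\times U,\psi\times\psi}\cong\Hom_G(\wt\cW,\cW)$, in particular checking that passing to smooth duals and to $U$-coinvariants on both sides commutes with the block decomposition and preserves finite generation, and that the contragredient of $\cW=ind_U^G(\psi)$ is again (non-canonically) of the form $ind_U^G(\psi^{-1})$ so that both arguments of the $\Hom$ are finitely generated projectives in the \emph{same} block. One must be careful that $\cW$ itself is not admissible, only $\cW_\omega$ is finitely generated, so the duality statement has to be applied block by block, after cutting by $K$, where everything becomes a statement about finitely generated modules over the Noetherian algebra $\cH(G,K)$. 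Once this reduction is in place, the Cohen-Macaulay conclusion is a formal consequence of Theorem~\ref{thm:CM} together with the fact — recorded in \cite[Theorem 2.1]{BBG97} — that the Cohen-Macaulay-of-full-dimension property passes to direct summands and to $\Hom$-spaces between finitely generated projectives.
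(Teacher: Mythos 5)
Your plan has the same skeleton as the paper's argument (identify $\Sc(G)_{U\times U,\psi\times \psi}$ with $\cW^R\otimes_{\cH(G)}\cW$ for $\cW=ind_U^G(\psi)$, use Theorem \ref{thm:proj} and Corollary \ref{cor:fg} to get a finitely generated projective object in each block, and feed the result into Theorem \ref{thm:CM} via \cite[Theorem 2.1]{BBG97}), but the bridge you insert to turn the tensor product into a $\Hom$-space is a genuine gap. The smooth contragredient of the compact induction $\cW=ind_U^G(\psi)$ is the \emph{full} induction $Ind_U^G(\psi^{-1})$, not $ind_U^G(\psi^{-1})$: since $U\backslash G$ is non-compact these are genuinely different, $\wt{\cW}$ is neither finitely generated nor projective, and there is no reason its $\omega$-component should agree with that of $ind_U^G(\psi^{-1})$. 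Moreover, the algebraic fact you are implicitly invoking --- $P\otimes_A M\cong\Hom_A\left(\Hom_A(P,A),M\right)$ for a finitely generated projective right $A$-module $P$ --- uses the \emph{$A$-linear} dual, which is not the contragredient even after cutting by a splitting subgroup $K$: since $\cW^K$ is infinite dimensional, $(\wt{\cW})^K\cong(\cW^K)^{*}$ is the full linear dual, a non-finitely-generated $\cH(G,K)$-module, and is not $\Hom_{\cH(G,K)}(\cW^K,\cH(G,K))$. So the identification $\left(\Sc(G)_{U\times U,\psi\times\psi}\right)_\omega\cong\Hom_G(\wt{\cW}_\omega,\cW_\omega)$ with both entries finitely generated projective in the same block --- precisely the step you flag as the ``main obstacle'' and then assume --- is unproved and, as stated, false; and it is the step carrying the whole reduction.

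The fix is that no duality is needed at all, and this is how the paper proceeds: once $\cW_\omega$ (and its right-module twin $\cW^R_\omega$, obtained via the inversion anti-involution) is known to be a finitely generated projective object of the block, it is a direct summand of $\cH(G)_\omega^{n_\omega}$, hence $\left(\Sc(G)_{U\times U,\psi\times\psi}\right)_\omega\cong\cW^R_\omega\otimes_{\cH(G)_\omega}\cW_\omega$ is a direct summand of $\cH(G)_\omega^{n_\omega}\otimes_{\cH(G)_\omega}\cH(G)_\omega^{n_\omega}\cong\cH(G)_\omega^{n_\omega^2}$, and the Cohen--Macaulay-of-full-dimension property, which passes to direct summands, follows from Theorem \ref{thm:CM}. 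If you prefer a $\Hom$-space formulation, replace $\wt{\cW}$ by the module-theoretic dual $\Hom_{\cH(G)_\omega}(\cW^R_\omega,\cH(G)_\omega)$ (or work with $\Hom_{\cH(G,K)}(\cW^K,\cH(G,K))$ after cutting by $K$); with that substitution the rest of your plan --- summands of matrix algebras over $\End$ of the projective generator, \cite[Theorem 2.1]{BBG97}, and finite generation over the center coming from finiteness of that endomorphism algebra over its center --- goes through and is essentially the paper's argument.
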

\begin{proof}
\RamiK{The inversion on $G$ gives an anti-involution of $\cH(G)$. It allows to make a right module $V^R$ from a left module $V$ of $\cH(G)$. Note that 
$$\Sc(G)_{U\times U,\psi\times \psi}\cong ind_{U}^G(\psi)^R\otimes_{\cH(G)}ind_{U}^G(\psi).$$}

\RamiK{By} Theorem \ref{thm:proj} and Corollary \ref{cor:fg}
the module $ind_{U}^G(\psi)_{\omega}$ is a direct summand of $\cH(G)_{\omega}^{n_\omega}$ for some $n_\omega$. So,  it is enough to check that 
$\cH(G)_{\omega}^{n_\omega}\otimes_{\cH(G)_{\omega}} \cH(G)_{\omega}^{n_\omega} \cong \cH(G)_{\omega}^{n_\omega^2}$ is \Rami{a Cohen-Macaulay module of full dimension} over $\fz(G)_{\omega}$.  This follows from Theorem \ref{thm:CM}. 
\end{proof}
\begin{proof}[Proof of Theorem \ref{thm:dens}]
 	Let $C$ be the collection of characters of $\fz(G)$ corresponding to $\Pi$.
 	
 	
 Without loss of generality $C \subset \cU_G$ \RamiH{(see Definition \ref{def:good})}.
 	\RamiK{Let} {$V=\Sc(G)_{U\times U,\psi\times \psi}$} as above.
 	Fix $f\in V$\RamiH{.}
 	 We have to show that if  for any spherical character $\xi$ of an irreducible representation  $\pi\in\Pi$ we have $$\langle\xi,f\rangle=0,$$  then  $f=0.$ 
     By Corollary \ref{cor:sph.are.all} we obtain that $f|_\chi=0$ for any $ \chi \in C$.  Note that $V \cong ind_{U}^G(\psi)^R\otimes_{\cH(G)}ind_{U}^G(\psi)$. Thus, by Lemma \ref{lem:wit.CM}, the module $V$ is Cohen-Macaulay of full dimension over $\fz(G)$. {Passing to a single component of $\Theta(G)$ and applying Corollary \ref{cor:CMisRelTorFree} we obtain $f=0$.}
\end{proof}


\subsection{Proof of Lemma \ref{lem:JH}}\label{ssec:pf.JH}
Let $I(\chi): i_M^G(\chi \rho) \to i_M^G(Ad(w)\chi \rho)$ be the intertwining operator defined for $Re(\chi)>>0$ (see e.g. \cite{Muic2008}).
By Lemma \ref{lem:gen.irr}, for generic $\chi$ it is an isomorphism. Fix now $f \in \cH(G)$, we have that 
$$tr(f,i_M^G(\chi \rho))=tr(f,i_M^G(Ad(w)\chi \rho))$$
for such $\chi$. But both sides are algebraic functions (e.g. Lemma 5.13 of \cite{Muic2008}) of $\chi$ hence, by linear independence of characters we obtain the result. 

\appendix
\section{\RamiN{Degenerate characters of  unipotent groups}}\label{app:deg}
\RamiN{In this appendix we prove some statement (Proposition \ref{prop:deg} below) on characters of maximal unipotent subgroups of $G$. We use this result in the next appendix in order to prove Theorem \ref{thm:BH}}

We will use the notations of \cite[\S 1]{BH03}. In particular 
we fix:
\begin{itemize}
    \item  $\bfS$ - a maximal split torus of $\bfG.$ 
    \item  $\bfT$ - the centralizer of $\bfS$ in $\bfG.$   
    \item A minimal parabolic $\bfB$ of $\bfG$, containing $\bfT$. 
    \item $\bfU$ - the unipotent radical of $\bfB.$
    \item $U:=\bfU(F)$.
    \item $\Phi$ - the set of relative roots corresponding to $(\bfG,\bfS)$.
     \item $\Phi^{+}$ - the set of positive roots corresponding to $\bfB$.   
    \item $\Delta$ - the set of simple roots corresponding to $\bfB$.

\end{itemize}

\begin{prop}\label{prop:deg}
\RamiL{I}f $\xi$ is degenerate character of $U$ then there exist a proper parabolic $\bfP \subset \bfG$ containing $\bfB$ such that $\xi|_{\bfV(F)}=1$, where $\bfV\subset \bfP$ is the unipotent radical of $\bfP$
\end{prop}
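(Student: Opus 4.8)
The plan is to unpack the definition of a degenerate character in the sense of \cite[\S 1]{BH03}. Recall that $U = \bfU(F)$ and that $\bfU$ has a filtration whose successive quotients are the root subgroups $\bfU_\alpha$ for $\alpha \in \Phi^+$; in particular the abelianization $U/[U,U]$ is naturally a product of the root groups $U_\alpha$ for $\alpha$ running over the simple roots $\Delta$ (the non-simple root groups land in the commutator subgroup $[U,U]$, so a character of $U$ is determined by its restriction to the simple root groups). A character $\xi$ of $U$ is called \emph{non-degenerate} precisely when its restriction to $U_\alpha$ is non-trivial for every $\alpha \in \Delta$; hence $\xi$ \emph{degenerate} means there is at least one simple root $\beta \in \Delta$ with $\xi|_{U_\beta} = 1$. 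First I would record this reformulation carefully, matching the normalizations of \cite{BH03}.

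Given such a $\beta$, let $J := \Delta \setminus \{\beta\}$ and let $\bfP = \bfP_J$ be the standard parabolic subgroup generated by $\bfB$ and the root subgroups $\bfU_{-\alpha}$ for $\alpha \in J$; let $\bfM = \bfM_J$ be its standard Levi and $\bfV = \bfV_J$ its unipotent radical. Since $\beta \notin J$, the parabolic $\bfP$ is proper. The unipotent radical $\bfV$ is generated by the root subgroups $\bfU_\gamma$ for $\gamma \in \Phi^+ \setminus \Phi^+_J$, i.e. those positive roots whose expansion in simple roots has a strictly positive coefficient on $\beta$. The key step is then to show $\xi|_{\bfV(F)} = 1$. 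Since $\xi$ is trivial on $[U,U]$, it suffices to check triviality on each $U_\gamma$ with $\gamma \in \Phi^+ \setminus \Phi^+_J$ modulo commutators; but every such $\gamma$ that is non-simple already lies in $[U,U]$, and the only simple root in $\Phi^+ \setminus \Phi^+_J$ is $\beta$ itself, on which $\xi$ is trivial by the choice of $\beta$. Hence $\xi$ kills all of $V := \bfV(F)$.

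The one genuinely delicate point — the place I expect to spend the most care — is the commutator bookkeeping: justifying that a character of $U$ trivial on the simple root group $U_\beta$ is automatically trivial on \emph{all} of $\bfV(F)$, not merely on $U_\beta$. This rests on the standard fact that $[U,U]$ is spanned by the non-simple positive root subgroups (equivalently, that $U^{\mathrm{ab}} = \prod_{\alpha\in\Delta} U_\alpha$), together with the observation that $\bfV$ is normalized by the root datum in such a way that $\bfV \cap \Phi^+ $-generators of non-simple type sit inside $[U,U]$. I would cite the relevant structure statement from \cite{BH03} (or from the general theory of \cite{BZ}-type root group filtrations) rather than reprove it, and then the proposition follows by assembling: choose $\beta$ with $\xi|_{U_\beta}=1$, set $J = \Delta\setminus\{\beta\}$, take $\bfP = \bfP_J$, and verify $\xi|_{\bfV(F)}=1$ as above.
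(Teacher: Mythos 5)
Your proposal follows essentially the same route as the paper: pick a simple root $\alpha$ on whose root group $\xi$ is trivial, take $\bfP=\bfP_{\Delta\smallsetminus\{\alpha\}}$, decompose the unipotent radical $\bfV$ as a product of the groups $U_{(\gamma)}$ for $\gamma\in\Phi^+$ not supported on $\Delta\smallsetminus\{\alpha\}$, and kill each factor either because it lies in $[U,U]$ or because it sits inside $U_{(\alpha)}$. Two points deserve correction, and both concern the step you yourself flag as delicate. First, the structure fact you invoke --- that $[U,U]$ contains every non-simple positive root group, equivalently $U^{\mathrm{ab}}=\prod_{\alpha\in\Delta}U_{(\alpha)}$ --- is not the right statement for a general reductive $\bfG$ over $F$: the relative root system may be non-reduced, and for a simple root $\alpha$ with $2\alpha\in\Phi^+$ the group $U_{(2\alpha)}$ is a non-simple positive root group that need not be handled via $[U,U]$. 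The correct dichotomy (Lemma \ref{lem:der} in the paper) is between roots \emph{not colinear} to a simple root, whose $U_{(\gamma)}$ does lie in $[U,U]$, and roots colinear to $\alpha$, for which one uses instead the containment $U_{(2\alpha)}\subset U_{(\alpha)}$ together with the hypothesis $\xi|_{U_{(\alpha)}}=1$; your argument survives once restated this way. Second, this ``standard fact'' is not something you can simply cite as folklore: it is not stated explicitly in \cite{BH02} or \cite{BH03}, and its proof requires knowing $[U,U]=[\bfU,\bfU](F)$ and a genuine case analysis (residue characteristic $2$, reduction to absolutely almost simple groups via restriction of scalars, isogenies). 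The paper devotes Lemma \ref{lem:der} and its multi-case proof to exactly this, so in a complete write-up you would need either to reproduce that argument or to point precisely at the statements of \cite{BH02} it is assembled from.
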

For the proof we will need some recollections from \cite{Borel} and 
results from \cite{BH02}.
For any root $\alpha\in \Phi^+$ one can define a subgroup $U_{(\alpha)}\subset U$ as in \cite[Proposition 21.9 (i)]{Borel}  or \cite[section 1.1]{BH02}. 
\begin{lemma}\label{lem:der}
If $\gamma\in \Phi^+$ is not colinear to a simple root, then $U_{(\gamma)}\subset [U,U]$
\end{lemma}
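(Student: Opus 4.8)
The statement to prove is Lemma~\ref{lem:der}: if $\gamma\in\Phi^+$ is not colinear to a simple root, then $U_{(\gamma)}\subset[U,U]$.

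My plan is to argue by induction on the height of $\gamma$ (the sum of the coefficients when $\gamma$ is expressed in the basis $\Delta$ of simple roots). Since $\gamma$ is not colinear to a simple root, its height is at least $2$, so in particular $\gamma$ is not simple. The standard structure theory of root systems (as in \cite[\S21]{Borel}) tells us that any non-simple positive root $\gamma$ can be written as $\gamma=\alpha+\beta$ with $\alpha\in\Delta$ a simple root and $\beta\in\Phi^+$ a positive root of strictly smaller height; more precisely one uses that if $\gamma$ is positive and not simple then $\langle\gamma,\alpha^\vee\rangle>0$ for some simple $\alpha$ with $\gamma\neq\alpha$, whence $\gamma-\alpha\in\Phi^+$. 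I would then invoke the commutator (Chevalley-type) relations for the groups $U_{(\alpha)}$ attached to relative roots, as recorded in \cite[Prop.~21.9]{Borel} or \cite[\S1.1]{BH02}: the group $U_{(\gamma)}$ is contained in the subgroup generated by $[U_{(\alpha)},U_{(\beta)}]$ together with the $U_{(i\alpha+j\beta)}$ for $i,j\ge 1$, $(i,j)\neq(1,1)$ — equivalently, modulo the higher-height pieces, $U_{(\gamma)}$ lands in $[U_{(\alpha)},U_{(\beta)}]$.

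Concretely, the two steps are: \textbf{Base case.} If $\gamma$ has height $2$, write $\gamma=\alpha+\beta$ with $\alpha,\beta\in\Delta$. Then by the commutator relations $U_{(\gamma)}\subset[U_{(\alpha)},U_{(\beta)}]\cdot\prod_{i+j>2}U_{(i\alpha+j\beta)}$, but $\gamma$ being of height $2$ and not colinear to a simple root, the factor $[U_{(\alpha)},U_{(\beta)}]$ already maps onto $U_{(\gamma)}$ modulo nothing (there are no roots $i\alpha+j\beta$ with $i+j>2$ that are needed, or one handles them by a secondary induction). Since $U_{(\alpha)},U_{(\beta)}\subset U$, we get $U_{(\gamma)}\subset[U,U]$. \textbf{Inductive step.} For $\gamma$ of height $h\ge 3$, write $\gamma=\alpha+\beta$ with $\alpha\in\Delta$, $\beta\in\Phi^+$ of height $h-1\ge 2$. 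The commutator relations give $U_{(\gamma)}\subset[U_{(\alpha)},U_{(\beta)}]\cdot\prod_{i+j\ge 2,\,(i,j)\neq(1,1)}U_{(i\alpha+j\beta)}$ where each occurring root $i\alpha+j\beta$ is a positive root of height $\ge h$, hence not colinear to a simple root, so by the induction hypothesis each such $U_{(i\alpha+j\beta)}\subset[U,U]$. Meanwhile $[U_{(\alpha)},U_{(\beta)}]\subset[U,U]$ trivially. Hence $U_{(\gamma)}\subset[U,U]$.

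The main obstacle I anticipate is bookkeeping with the precise form of the commutator relations in the \emph{relative} (non-split) setting: when $\bfG$ is not split, the relative root system may be non-reduced, the root subgroups $U_{(\alpha)}$ need not be abelian, and one must be careful about which multiples $i\alpha+j\beta$ actually occur and about the exact meaning of ``$U_{(\gamma)}$ appears in the commutator''. The cleanest route is probably to reduce everything to the statement that $U/[U,U]$ is, as a group, a direct product of the images of the $U_{(\alpha)}$ for $\alpha\in\Delta$ (this is essentially the assertion that the abelianization of $U$ is detected by simple-root coordinates), which is exactly the content of the cited results of \cite{BH02}; then a root $\gamma$ of height $\ge 2$ has trivial image in this product because its ``coordinates'' involve non-simple directions, forcing $U_{(\gamma)}\subset[U,U]$. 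I would double-check against \cite[\S1.1]{BH02} which of these two packagings (inductive commutator computation versus abelianization statement) is available off the shelf, and use that one directly, keeping the induction on height only as a fallback if the abelianization statement is not stated in the form I need.
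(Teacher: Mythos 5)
Your main line of argument --- induction on height via the commutator relations --- has the key inclusion backwards. The relation recorded in \cite[Proposition 21.9]{Borel} (and in \cite[\S 1.1]{BH02}) states that $[U_{(\alpha)},U_{(\beta)}]\subset\prod_{i,j\geq 1}U_{(i\alpha+j\beta)}$: it bounds the commutator group \emph{from above} by a product of root groups. What you need is the opposite containment, namely that $U_{(\gamma)}$ is actually filled up by commutators modulo the higher terms. That surjectivity is not formal: it depends on the structure constants of the commutator maps, which need not be units in characteristic $2$; moreover in the relative (non-quasi-split) setting the groups $U_{(\alpha)}$ are not one-parameter, the relative root system may be non-reduced of type $BC_n$, and even the identity $[U,U]=[\bfU,\bfU](F)$ used implicitly requires proof. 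This is exactly where the difficulty of the lemma lies, and both your base case and your inductive step assert the surjectivity without justification.

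You do flag this obstacle and propose, as the cleaner packaging, to quote the description of $[\bfU,\bfU]$ (equivalently, of the abelianization of $U$) from \cite{BH02}; that is indeed the route the paper takes. But the needed statement is not available off the shelf in the required generality: \cite[Theorem 4.1(2)]{BH02} covers $\mathrm{char}(F)\neq 2$, and \cite[Theorem 2.1(2,3)]{BH02} covers characteristic $2$ only for absolutely almost simple groups. The actual content of the paper's proof is the chain of reductions needed to pass from those cases to a general reductive $\bfG$ in characteristic $2$: restriction of scalars (via \cite[3.1.2]{Tit66} and \cite[Lemma 4.4, Lemma 4.5(2), Proposition 4.5]{BH02}), products of simply connected almost simple factors, invariance of $\bfU$ under isogeny (\cite[Proposition 5.2]{BH02}), and finally $\bfU\subset[\bfG,\bfG]$. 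None of these steps appears in your proposal, so as written the argument has a genuine gap both in its primary route (the unproved commutator surjectivity) and in its fallback (the missing reductions to the cases actually treated in \cite{BH02}).
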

This lemma is proven in \cite{BH02}. As it is not formulated explicitly there, we indicate its proof.

\begin{proof}[Proof of Lemma \ref{lem:der}]
Let $char(F)$ be the  characteristic of $F$. Recall that by \cite{BH02} we have $[U,U]=[\bfU,\bfU](F)$. \RamiL{We split the proof of the lemma into several cases}
\begin{enumerate}[{Case} 1:]
\item  $char(F) \ne 2$.
$ $ \\ 
In this case the statement follows from \cite[Theorem 4.1 (2)]{BH02}. 
\item  $char(F)=2$,  and $\bfG$ is absolutely almost simple.
$ $ \\ 
This case follows from \cite[Theorem 2.1(2,3)]{BH02}. 
\item  $char(F)=2$,  and $\bfG$ is $F$-almost simple, simply connected group.\\ By \cite[3.1.2]{Tit66} and \cite[Proposition 4.3]{BH02}, in this case $\bfG$ is a restriction of scalars of an absolutely almost simple group $\bfG'$.
Now, by 
\cite[Lemma 4.4, Lemma 4.5(2) and Proposition 4.5]{BH02}
the result follows from the previous case.
\item 
$char(F)=2$ and $\bfG$ simply connected.\\
This case follows from the previous one, since any  simply connected group is a  product of \RamiL{simply connected,} almost simple groups.
\item $char(F)=2$,  and $G$ is semi-simple.\\
This case follows from the previous one, since by \cite[Proposition 5.2]{BH02} the  group $\bfU$ does not change when we replace $\bfG$ by an isogenus group.   
\item  $char(F)=2$.\\ 
\RamiL{This case follows from the previus one, since $\bfU\subset [\bfG,\bfG]$.}
\end{enumerate}
\end{proof}
One can assign to a set of simple roots $I \subset \Delta$ a maximal parabolic subgroup $\bfP_{I}$, see \cite[subsection 21.11, Proposition 21.12]{Borel}.
Let $\bfU_{I}$ be the unipotent radical of $\bfP_{I}$ and let $U_{I}=\bfU_{I}(F)$. Unfolding the concept of direct spanning  (see {\cite[subsection 14.3]{Borel}}) 
we have:
\begin{lemma}[cf. {\cite[Proposition 21.9 (ii)]{Borel}}]\label{lem:radicals}
For a subset $I \subset \Delta$ denote by $[I]$ the set of elements in $\Phi$ that are non negative integral combinations of elements of $I$. We have $$U_{I}=\prod_{\beta \in \Phi^+ \smallsetminus [I] }U_{(\beta)},$$
where the product can be taken in any order.
\end{lemma}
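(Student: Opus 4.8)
The plan is to deduce the statement from the structure theory of parabolic subgroups over $F$ as in \cite[\S 21]{Borel}, and then pass to $F$-points. Set $\Psi := \Phi^{+}\smallsetminus[I]$. The first step is to check that $\Psi$ is a closed subset of $\Phi$ with $\Psi\cap(-\Psi)=\emptyset$. The second property is immediate since $\Psi\subset\Phi^{+}$ and $\Phi^{+}\cap(-\Phi^{+})=\emptyset$. For closedness, take $\alpha,\beta\in\Psi$ with $\alpha+\beta\in\Phi$; then $\alpha+\beta\in\Phi^{+}$, and writing $\alpha$ and $\beta$ in the basis $\Delta$ with nonnegative coefficients, if $\alpha+\beta$ lay in $[I]$ the coefficients of both $\alpha$ and $\beta$ on every simple root outside $I$ would have to vanish, forcing $\alpha,\beta\in[I]$, contrary to $\alpha\in\Psi$; hence $\alpha+\beta\in\Phi^{+}\smallsetminus[I]=\Psi$. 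The same computation with positive multiples (which have the same support as the root being scaled) shows $\Psi$ is closed under the scaling that occurs in non-reduced systems.

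Next I would identify $\bfU_{I}$ with the group $\bfU_{\Psi}$ produced by \cite[Proposition 21.9(i)]{Borel}, namely the unique closed connected unipotent subgroup of $\bfG$ normalized by $\bfT$ whose Lie algebra is $\bigoplus_{a\in\Psi}\mathfrak{g}_{a}$. Indeed the unipotent radical $\bfU_{I}$ of $\bfP_{I}$ is closed, connected and unipotent, is normalized by the Levi subgroup $\bfM_{I}\supseteq\bfT$, and has Lie algebra $\bigoplus_{a\in\Phi^{+}\smallsetminus[I]}\mathfrak{g}_{a}$: this last point follows from the Levi decomposition $\operatorname{Lie}(\bfP_{I})=\operatorname{Lie}(\bfM_{I})\oplus\operatorname{Lie}(\bfU_{I})$ together with the description of the roots of $\bfP_{I}$ as $\Phi^{+}\cup(-[I])$ and those of $\bfM_{I}$ as $[I]\cup(-[I])$ (see \cite[21.11]{Borel}). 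By the uniqueness clause in \cite[Proposition 21.9(i)]{Borel} we get $\bfU_{I}=\bfU_{\Psi}$. Now \cite[Proposition 21.9(ii)]{Borel} asserts that $\bfU_{\Psi}$ is directly spanned, in any order, by the subgroups $\bfU_{(a)}$ with $a$ running over the nondivisible roots of $\Psi$; adjoining the divisible roots of $\Psi$ changes nothing, since $\bfU_{(a)}\subseteq\bfU_{(a/2)}$ whenever $a/2\in\Phi$, so they contribute only redundant factors and any enumeration of the nondivisible roots of $\Psi$ is the restriction of an enumeration of all of $\Psi$. Unfolding the definition of direct spanning (\cite[14.3]{Borel}): for every enumeration $\beta_{1},\dots,\beta_{r}$ of $\Phi^{+}\smallsetminus[I]$ the product morphism $\bfU_{(\beta_{1})}\times\cdots\times\bfU_{(\beta_{r})}\to\bfU_{I}$ is surjective (and an isomorphism of $F$-varieties once the redundant factors are discarded). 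Everything here is defined over $F$, so evaluating on $F$-points yields $U_{I}=\bfU_{I}(F)=\prod_{\beta\in\Phi^{+}\smallsetminus[I]}\bfU_{(\beta)}(F)=\prod_{\beta\in\Phi^{+}\smallsetminus[I]}U_{(\beta)}$, in any order, which is the claim.

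I do not expect a genuine obstacle here: the mathematical content sits entirely in \cite[\S 21]{Borel}, and the proof is essentially a matter of matching conventions. The two points that need a little care are making Borel's indexing of the subgroups $\bfU_{(\alpha)}$ compatible with the possibly non-reduced relative root system $\Phi$, so that multipliable and divisible roots are bookkept correctly in the product, and recording that \emph{direct spanning} is a statement about morphisms of $F$-varieties, which is precisely what legitimizes the final passage to $F$-points.
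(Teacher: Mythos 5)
Your argument is correct and follows exactly the route the paper intends: the paper proves this lemma simply by citing \cite[Proposition 21.9(ii)]{Borel} and unfolding the notion of direct spanning from \cite[14.3]{Borel}, and your write-up just supplies the routine verifications (closedness of $\Phi^{+}\smallsetminus[I]$, identification of $\bfU_{I}$ with $\bfU_{\Psi}$, redundancy of the divisible-root factors, passage to $F$-points) that the paper leaves implicit.
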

\begin{proof}[Proof of
Proposition \ref{prop:deg}]
By the definition, a degenerate character of $U$ is a character that is trivial on $U_{(\alpha)}$ for some some $\alpha \in \Delta$. Let $\alpha$ be such that $\xi|_{U_{(\alpha)}}=1$ and let $\bfP:=\bfP_{\Delta-\{\alpha\}}$. It is left to show that $$\xi|_{U_{\Delta-\{\alpha\}}}=1.$$ 
By Lemma \ref{lem:radicals}, it is enough to show that for any $\beta \in \Phi^+ \smallsetminus [\RamiL{\Delta-\{\alpha\}}]$ we have $\xi|_{U_{(\beta)}}=1$.
As $\beta \in \Phi^+ \smallsetminus [\Delta-\{\alpha\}]$ we  will do it by considering two cases:
\begin{enumerate}[{Case} 1:]
    \item $\beta$ is not co-linear to a simple root.\\
    In this case the assertion follows from Lemma \ref{lem:der}
    \item $\beta$ is  co-linear to a simple root.\\
    In this case $\beta$ have to be co-linear to $\alpha$ and we have $U_{(\beta)}\subset U_{(\alpha)}$. By the assumption on $\alpha$ this implies that $\xi\RamiL{|}_{U_{(\beta)}}=1$.
\end{enumerate}
 \end{proof}
\section{\RamiN{Finite multiplicity result for degenerate Whittaker models}}\label{app:BH}

\RamiN{In this appendix we deduce Theorem \ref{thm:BH} from the special case where the character $\psi$ is non-degenerate, a result proven in {\cite[\S 4]{BH03}.}

The reduction \EitanD{to the non-degenerate case} is based on induction and Proposition \ref{prop:deg}.
}

\begin{proof}[Proof of Theorem \ref{thm:BH}]
The case when $\xi$ is non-degenerate is proven in \cite[{\S 4}]{BH03}.
We will prove the general case by induction on the dimension of $\bfG$. We can assume that  $\xi$ is degenerate.
Using Proposition \ref{prop:deg} we can find a \EitanF{proper} parabolic subgroup $\bfP\subset \bfG$ such that $\xi$  is trivial on $\bfV(F)$, where $\bfV\subset \bfP$ is the unipotent radical of $\bfP$. Let $\bfM$ be the (standard) Levi subgroup of $\bfP$. 

Note that $\xi$ is the pullback of a character $\xi_0$ of the group $\bfU(F)/\bfV(F)=(\bfU/\bfV)(F)=:\bfN(F)$.

We can identify $\bfN$ with the unipotent radical of the minimal parabolic subgroup of $\bfM$.

We have $$(\pi^*)^{(U,\xi)} \cong ((r_{M}^G(\pi))^*)^{(N,\xi_0)}.$$ 

Since $\pi$ is irreducible it is
finitely generated and admissible (see e.g. \cite[Theorem 12]{Ber_padic}). This implies that the representation $r_{M}^G(\pi)$ is finitely generated and admissible (see e.g. \cite[Proposition 19, Jacquet's lemma (page 64)]{Ber_padic})
Hence it is of finite length (See e.g. \cite[Theorem 6.3.10]{Cas}). The assertion follows now from the induction assumption.
\end{proof}

\bibliographystyle{alpha}
\bibliography{Ramibib}
\end{document}